\documentclass[hidelinks,onefignum,onetabnum]{siamart250211}

\usepackage{amsmath,amssymb}
\usepackage{graphicx}
\usepackage{subcaption}
\usepackage{todonotes}
\usepackage{algorithmicx}
\usepackage{algpseudocode}
\usepackage{mathtools}
\usepackage{mathrsfs}
\algdef{SE}[DOWHILE]{Do}{doWhile}{\algorithmicdo}[1]{\algorithmicwhile\ #1}%
\def\reg#1{\II}

\newcommand{\newthm}[2]{
  \theoremstyle{plain}
  \theoremheaderfont{\normalfont\sc}
  \theorembodyfont{\normalfont\itshape}
  \theoremseparator{.}
  \theoremsymbol{}
  \newtheorem{#1}[theorem]{#2}
}

\newthm{redlemma}{Lemma}
\newsiamremark{remarkN}{Remark}

\def\R{{\mathbb R}}

\def\N{{\mathbb N}}

\def\MM{{\mathcal M}}

\def\PP{{\mathcal P}}

\def\II{{\mathcal I}}

\def\SS{{\mathcal S}}

\def\TT{{\mathcal T}}

\def\UU{\mathcal U}

\def\XX{{\mathcal X}}
\def\YY{{\mathcal Y}}

\def\tend{{t_{\rm end}}}

\def\norm#1#2{\|#1\|_{#2}}

\def\set#1#2{\big\{#1\,:\,#2\big\}}

\def\v{\mathbf{v}}


\def\normL2#1#2{\|#1\|_{L^2(#2)}}

\newcommand{\dual}[3][]{#1\langle#2\,,\,#3#1\rangle}

\def\T{\mathbb T}

\headers{Optimal Time-Adaptivity}{M. Feischl, F. Henr\'iquez, D. Niederkofler}
\title{Optimal Time-Adaptivity for Parabolic Problems\thanks{Submitted to the editors on \today.
\funding{Funded by the Deutsche Forschungsgemeinschaft (DFG, German Research Foundation) -- Project-ID 258734477 -- SFB 1173, the Austrian Science Fund (FWF)
under the special research program Taming complexity in PDE systems (grant SFB F65) as well as project I6667-N. Funding was received also from the European Research Council (ERC) under the
European Union’s Horizon 2020
research and innovation programme (Grant agreement No. 101125225).}}}

\author{Michael Feischl, Fernando Henr\'iquez, and David Niederkofler\thanks{Institute for Analysis and Scientific Computing, Vienna University of Technology, Wiedner Hauptstra{\ss}e 8-10, A-1040 Wien, Austria. E-mail addresses: {\tt\{michael.feischl,fernando.henriquez,david.niederkofler\}@asc.tuwien.ac.at}}.}

\begin{document}
\maketitle
\begin{abstract}
 Since the first optimality proofs for adaptive mesh refinement algorithms in the early 2000s, the theory of optimal mesh refinement for PDEs was inherently limited to stationary problems. The reason for this is that time-dependent problems usually do not exhibit the necessary coercive structure that is used in optimality proofs to show a certain quasi-orthogonality, which is crucial for the theory. Recently, by using a new equivalence between quasi-orthogonality and inf-sup stability of the underlying problem, it was shown that an adaptive Crank-Nicolson scheme for the heat equation is optimal under a severe step size restriction. In this work, we use this new approach towards quasi-orthogonality together with  Radau IIA methods of any order larger than one to obtain the first adaptive time stepping method for non-stationary PDEs that is provably rate optimal with respect to number of time steps vs. approximation error. 
\end{abstract}

\section{Introduction}
The theory of optimal adaptive mesh refinement originated from breakthrough results by Binev, Dahmen, DeVore~\cite{bdd}, Stevenson~\cite{stevenson07}, and Cascon, Kreuzer, Nochetto, Siebert~\cite{ckns}, who showed that a standard adaptive loop of the form
\begin{align*}
    \fbox{Solve}\longrightarrow\fbox{Estimate}\longrightarrow\fbox{Mark}\longrightarrow\fbox{Refine}
\end{align*}
produces optimal convergence rates for the Poisson problem. The new ideas inspired a flurry of research in this area, extending the results to many other model problems, see e.g.,~\cite{ks,cn} for conforming methods,~\cite{BeMao10,ncstokes3}    
for nonconforming methods,~\cite{LCMHJX,CR2012} for mixed formulations, and~\cite{fkmp,gantumur} for boundary integral equations.
For a comprehensive overview, we refer to~\cite{carstensen_2014,actaadaptive}. Roughly speaking, the strategy to show that an adaptive algorithm is optimal in terms of convergence rate is the following: 
\begin{enumerate}
    \item[(A)] Derive an error estimator $\eta$ that is an upper bound for the approximation error.
    \item[(B)] Confirm that one step of the adaptive algorithm results in a perturbed reduction of the error estimator, i.e., $\eta_{\ell+1}\leq q \eta_\ell + {\rm pert}_\ell$
    \item[(C)] Check that the perturbations ${\rm pert}_\ell$ are summable in a certain way. This usually follows from orthogonality properties (quasi-orthogonality).
    \item[$\implies$] Use (A)--(C) to show optimality of the algorithm.
\end{enumerate}
For the Poisson equation with standard residual based error estimator and nested ansatz spaces for example, the perturbations are summable due to the Galerkin orthogonality of the Galerkin solutions together with the symmetry of the problem. This crucial part of the argument does not transfer to time-dependent problems, which, at least in their standard form, usually lack symmetry and coercivity. Even indefinite or nonsymmetric stationary problems (such as the Stokes problem) posed a big hurdle for the theory of optimal mesh refinement.

There are two possible ways out of this: One option is to find a non-standard discretization of the time-dependent problem that has symmetry and coercivity, see, e.g.,~\cite{diening2025} where the authors find a symmetric reformulation of the heat equation in non-standard Sobolev spaces, or~\cite{FUHRER202127,GaSte24} where a least squares reformulation is used for adaptivity.  These non-standard discretizations avoid the lack of coercivity but come with other difficulties that so far have prevented optimality proofs. 
We also mention~\cite{adaptiveG,adaptiveDG}, which develop adaptive time stepping algorithms for parabolic problems that are guaranteed to reach the final time and obey a given error tolerance.

On the other hand, adaptive wavelet discretizations~\cite{wavelet_schwab} are known to be optimal for quite some time now but come with challenging implementation and other restrictions. See also~\cite{halfwavelet} for a hybrid, wavelet-in-time approach.

The second option is to prove optimality without relying on coercivity and symmetry.
This problem was tackled recently in~\cite{infsup}. Roughly speaking, the result shows that if the underlying discrete method is uniformly inf-sup stable, the quasi-orthogonality~(C) is automatically true. This is proved by exploiting a connection between quasi-orthogonality and the stability of the LU-factorization of matrices and removes a major hurdle for optimality proofs of non-symmetric problems. In~\cite{infsup} optimality of the adaptive algorithm is shown for the Stokes problem with Taylor-Hood elements, for a transmission problem with finite-element/boundary-element coupling, and, most important for the present work, for adaptive time stepping with the Crank-Nicolson method for the heat equation. 
The last result, however, is only true under a severe and unrealistic step size restriction of the form $\tau\lesssim h^2$, where $\tau$ is the time step size and $h$ is the size of the spatial elements. 

\bigskip

The goal of this work is to lift this restriction and to propose the first provably optimal adaptive algorithm for a time dependent problem. 
The natural idea is to apply the results from~\cite{infsup} to a better time stepping method that does not require a CFL condition.
The main difficulty of this approach, however, is that the results in~\cite{infsup} require a time stepping scheme that can be equivalently written as a Petrov-Galerkin method with certain ansatz and test spaces $\XX_\TT$, $\YY_\TT$ corresponding to a sequence of time steps $\TT$, i.e., the time stepping approximation $u_\TT$ must be the unique solution of
\begin{align*}
    a(u_\TT, v) = f(v)\quad\text{ for all }v\in \YY_\TT.
\end{align*}
Moreover, the spaces must be nested, i.e., $\XX_\TT\subseteq \XX_{\TT'}$ and $\YY_\TT\subseteq \YY_{\TT'}$ if $\TT'$ is a finer sequence than $\TT$ and the method must be inf-sup stable in the sense
\begin{align*}
    \inf_{u\in \XX_\TT}\sup_{v\in \YY_\TT}\frac{ a(u,v)}{\norm{u}{\XX}\norm{v}{\YY}}\geq c_0>0,
\end{align*}
with some constant $c_0>0$ which is independent of $\TT$. 

\bigskip

These conditions prevent a straightforward proof for commonly used schemes such as the implicit Euler method or the Crank-Nicolson method.
While the Crank-Nicolson method can be written as a Petrov-Galerkin scheme with $\XX_\TT$ being piecewise linear and $\YY_\TT$ piecewise constant functions in time, the inf-sup stability holds only under the step size restriction discussed above. On the other hand, the implicit Euler method satisfies the inf-sup condition, however, we could only find conforming Petrov-Galerkin interpretations with non-nested test spaces.

\bigskip

To overcome this problem, we first considered a time stepping method that can be seen as a hybrid of Crank-Nicolson and implicit Euler.
We arrive at this method by forcing the residual of the equation to have zero integral mean and to be pointwise zero at the endpoint of each time interval. It turned out that such a method is equivalent to the third order Radau IIA method if the data is elementwise quadratic (see, e.g.,~\cite{hairer}), which is a collocation method that evaluates at the points $1/3$ and $1$ relative to each time interval.
Consequently, we find conforming Petrov-Galerkin interpretations of the Radau IIA methods of any order larger than one with nested ansatz and test spaces by realizing the point evaluation with its Riesz representer.
These higher-order Radau IIA methods are known to be inf-sup stable for slightly different test spaces since~\cite{andreev_rk}. (We refer to~\cite{infsupsmears} for a treatment of time step wise inf-sup stability of a dG-reformulation of the Radau IIA method (among others))
We derive a residual based error estimator that satisfies the properties~(A)--(B) from above and use~\cite{infsup} to show~(C) and thus optimality of the corresponding adaptive algorithm.

To summarize, the main contributions of the present work are:
\begin{itemize}
    \item A new interpretation of the Radau IIA methods of order $\geq 2$ as Petrov-Galerkin schemes with nested ansatz and test spaces.
    \item The first proof of optimal convergence rates for a time-adaptive method for parabolic problems without CFL condition.
\end{itemize}

\section{Model Problem \& Discretization}\label{sec:model_problem}
We consider the abstract heat equation on the time interval $[0,\tend]$ for the finite dimensional Gelfand triple $V\subseteq H\subseteq V^\star$ (note that $V$, $H$, and $V^\star$ contain the same elements), i.e., for an elliptic and self-adjoint operator $A\colon V\to V^\star$, $u_0\in H$, and $f\in H^1(0,\tend;V^\star)$, we solve
\begin{align}\label{eq:parabolic_problem}
(\partial_t+A)u &= f\quad\text{in } [0,\tend]\times V^\star,\\
u(0)&=u_0\quad\text{in } H.
\end{align}
Note that the natural space for the solution is $u\in \XX:= L^2(0,\tend;V)\cap H^1(0,\tend;V^\star)$
and we define the natural test space $\YY:=L^2(0,\tend;V)$. We denote the $V^\star,V$ duality brackets and the $H$-inner product by $\dual{\cdot}{\cdot}$, where its meaning will be clear from the context. We slightly abuse notation and write $Au$ to indicate the pointwise in-time application of $A$ (or $A^{-1}$), i.e. $(Au)(t):=Au(t)$. We repeat the well-known inf-sup stability result, which can be found in e.g. \cite[Theorem 5.1]{wavelet_schwab}, for completeness.
\begin{lemma}\label{lem:infsupcont}
Given $u\in\XX$, the test function  $v:=A^{-1}(\partial_t u + Au) \in \YY$ satisfies
\begin{align}\label{eq:infsup}
\begin{split}
    C\norm{(\partial_t +A)u}{L^2(0,\tend;V^\star)}^2 
&\geq  \int_0^\tend \dual{(\partial_t +A)u}{ v}\,dt\\
&\geq c \norm{u}{\XX}^2 + \norm{u(\tend)}{H}^2-\norm{u(0)}{H}^2
\end{split}
\end{align}
and $\norm{v}{\YY}\leq \sqrt{2} \norm{A^{-1}}{V^\star \to V}\norm{u}{\XX}$. The ellipticity constant $c_A$ of $A\colon V\to V^\star$ defines $C:=\norm{A^{-1}}{V^\star\to V}$ and $c:=\min\{c_A,c_A/\norm{A}{V\to V^\star}^2\}$. Using energy norms on $V$ and $V^\star$ with respect to the operator $A$ gives $c=C=1$.
\end{lemma}

\subsection{Radau IIA timestepping schemes}
\label{sec:modCN}
Let $\TT$ denote a time-mesh of the form
\begin{align*}
\TT = \set{T_i=[t_{i-1},t_{i}]}{i=1,\ldots,\#\TT,\,t_0=0<t_1<\ldots<t_{\#\TT}=\tend}.
\end{align*}
We may also index the time steps $t_i$ with the elements $T\in\TT$, i.e., $T=[t_T,t_{T+1}]$. 
Given a finite dimensional vector space $W$, we define the polynomial  spaces
\begin{align}\label{eq:defS2}
\begin{split}
\PP^k(T,W)&:=\set{v\in L^2(T;W)}{v\text{ is a degree $k$ polynomial}},\\
\PP^k(\TT;W)&:=\set{v\in L^2(0,\tend;W)}{v|_T \in \PP^k(T,W)\text{ for all }T\in\TT},\\
\SS^k(\TT;W)&:=\set{v\in H^1(0,\tend;W)}{v|_T \in \PP^k(T,W)\text{ for all }T\in\TT}\quad\text{and}\\
\XX_\TT&:=\XX_\TT(V):=\SS^k(\TT;V).
\end{split}
\end{align}
Define $\Pi_\TT^k\colon L^2(0,\tend;V^\star)\to \PP^k(\TT;V^\star)$ as the $L^2$-orthogonal projection onto piecewise polynomials of order $k$ in time and note $\Pi_T^k(f):=\Pi_\TT^k(f)|_T$ is well-defined for $f\in L^2(T;V^\star)$ and all $T\in\TT$ .
For $k\geq 2$, we propose a time stepping scheme for the heat equation by searching for an approximation $u_\TT\in\XX_\TT$ that satisfies $u_\TT(0)=u_0\in V$ and
\begin{subequations}\label{eq:cnscheme}
\begin{align}\label{eq:cnscheme1}
\int_0^\tend \dual{\partial_t u_\TT}{v}+\dual{Au_\TT}{v}\,ds &= \int_0^\tend \dual{f}{v}\,ds \quad\text{for all }v\in \PP^{k-2}(\TT;V),\\
\dual{\partial_t u_\TT(t_{T+1})}{v}+\dual{Au_\TT(t_{T+1})}{v}&= \dual{\Pi_\TT^{k} f(t_{T+1})}{v} \quad\text{for all }T\in\TT\text{ and }v\in V.\notag
\end{align}
Note that for $k=2$, the first equation is reminiscent of the Crank-Nicolson method, while the second equation comes from the implicit Euler scheme.  We can replace $\Pi_\TT^kf$ by $f$ in the first condition since we test with $\chi_T\in \PP^{k-2}(\TT)$.
It turns out that this scheme is equivalent to the $k$-stage  Radau IIA Runge-Kutta scheme if one replaces $f$ by $\Pi^k_\TT f$ as already pointed out in \cite{andreev_rk}. Radau IIA methods are collocation schemes that collocate at the $k$ right Radau quadrature points. We denote them by $\tau_1^{T,k},\ldots, \tau_k^{T,k} \in T$ in the following. Note that $\tau_k^{T,k}=t_{T+1}$. Thus for $k=2$, an equivalent form of the method reads
\begin{align*}
\begin{split}
    u_\TT(t_{T+1}) &= u_\TT(t_T) + |T|\big(\frac34 k_1 + \frac14 k_2\big),\quad\\
    \text{with $k_1$, $k_2$ solving}\quad 
    &\begin{cases}
    k_1 = (\Pi_T^2f)(t_T+\frac13|T|)-Au_\TT(t_T)-|T| A\big(\frac5{12}k_1-\frac1{12}k_2\big),\\
    k_2 =(\Pi_T^2f)(t_{T+1})-Au_\TT(t_T)-|T| A\big(\frac34 k_1{+}\frac14 k_2\big),
    \end{cases}
    \end{split}
\end{align*}
 as the right Radau quadrature points are $t_T+1/3|T|$ and $t_{T+1}$. 
\subsection{Equivalent Petrov-Galerkin formulation}\label{sec:PG}
For the theoretical considerations below, we want to rewrite~\eqref{eq:cnscheme1} as a Galerkin method. Note that practical computations can be performed with the formulation~\eqref{eq:cnscheme1} above in the classical time stepping sense.

To do this, given $T\in\TT$, define 
\begin{align*}
\TT_T\coloneqq\begin{cases}\{[t_T,t_T+|T|/3],[t_T+|T|/3,t_{T}+2|T|/3],[t_{T}+2|T|/3,t_{T+1}]&\text{for }k=2,\\
\{[t_T,t_T+|T|/2],[t_T+|T|/2,t_{T+1}]\} &\text{for }k>2.
\end{cases}
\end{align*}
as the uniform partition of $T$ into two or three elements and define $\Psi_{T}\in \PP^{k-2}(\TT_T)$ as the representer of the point evaluation functional at $t_{T+1}$, i.e., a function that satisfies $\int_T \Psi_T v\,ds = v(t_{T+1})$ for all $v\in \PP^k(T)$. 
Such a representer exists in all cases due to the following lemma.
\begin{lemma}
For $k\geq 2$, there exists a function $\Psi_T\in \PP^{k-2}(\TT_T)$ with $\int_T \Psi_T v\,ds = v(t_{T+1})$ for all $v\in \PP^k(T)$ and  $\norm{\Psi_T}{L^2(T)}=C_{\rm point} |T|^{-1/2}$ for a constant $C_{\rm point}>0$ that depends only on $k$.
\end{lemma}
\begin{proof}
We consider the case $T=[0,1].$
Due to the finite dimensionality of the problem, it suffices to show that for all $v\in \PP^k(T)\setminus\{0\}$, there exists $\Psi\in \PP^{k-2}(\TT_T)$ with $\int_T \Psi v\,ds\neq 0$:
    We write $v(t) = v_0\prod_{i=1}^n (t-z_i)$ as a product of the real zeros $z_i\in (0,1)$ and the remainder $v_0$. Note that $v_0$ is still a real polynomial in general, but is non-zero in $(0,1)$.
    Clearly, $n\leq k$ and since $\#\TT_T=3$ for $k=2$ and $\#\TT_T=2$ for $k>2$, there exists $T'\in\TT_T$ with $\#\set{j=1,\ldots,n}{z_j\in T'\setminus \{t_{T'},t_{T'+1}\}}\leq k-2$. Define $\Psi\in\PP^{k-2}(\TT_T)$ as $\Psi(t):=\prod_{z_i\in T'\setminus \{t_{T'},t_{T'+1}\}} (t-z_i)$ for $t\in T'$ and $\Psi(t)=0$ else. This satisfies 
    \begin{align*}
    (v\Psi)|_{T'} = v_0\prod_{z_i\notin T'\setminus \{t_{T'},t_{T'+1}\}}(t-z_i)\prod_{z_i\in T'\setminus \{t_{T'},t_{T'+1}\}} (t-z_i)^2.
    \end{align*}
    Note that $v_0$ does not change sign in $T'$. Since this necessarily implies that $v\Psi$ can't change sign in $T'$, we have $\int_T \Psi v\,ds\neq 0$ since $v\Psi$ is not identically zero.
    
    This implies the existence of at least one function $\Psi_{[0,1]}$ with the desired properties and $C_{\rm point}:=\norm{\Psi_{[0,1]}}{L^2([0,1])}$. Scaling shows $\Psi_T(t) = |T|^{-1}\Psi_{[0,1]}((t-t_T)/|T|)$ for all $t\in T$ and hence concludes the proof.
\end{proof}
E.g., for $k=2$, we may compute $\Psi_{[0,1]}$ as
\begin{align*}
\Psi_{[0,1]}(x):=\begin{cases}
    1 & 0\leq x<1/3,\\
    -7/2 & 1/3\leq x<2/3,\\
    11/2 & 2/3\leq x\leq 1.
\end{cases}
\end{align*}    
For all $k\in\N$, we define the weighted point evaluation as $\Phi_{T} := |T|\Psi_{T}$ and
observe $\norm{\Phi_T}{L^2(T)}=C_{\rm point}|T|^{1/2}$. 
In the following, given functions $a\colon [0,\tend]\to \R$ and $b\colon \Omega \to \R$, we denote by $ab$ or $ba$ the function $(t,x)\mapsto a(t)b(x)$.
We may thus define the test space
\begin{align*}
\YY_\TT&:=\YY_\TT(V) :=  \YY_\TT(V)^{\rm point} + \YY_\TT(V)^{\rm mean}\\
&:={\rm span}\set{v\Phi_T}{T\in\TT,\,v\in V}+{\rm span}\set{v\chi}{\chi\in \PP^{k-2}(\TT),\,v\in V}
\end{align*}
and recall the ansatz space $\XX_\TT$ from~\eqref{eq:defS2}.
This results in the scheme: Given $f_\TT:=\Pi_\TT^k f$, find $u_\TT\in\XX_\TT$ such that
\begin{align}\label{eq:cnscheme2}
\int_0^\tend \dual{(\partial_t+A)u_\TT}{v}\,ds + \dual{u_\TT(0)}{w} &= \int_0^\tend \dual{f_\TT}{v}\,ds +\dual{u_0}{w},
\end{align}
for all $ (w,v)\in H\times\YY_\TT.$ Note that this is equivalent to~\eqref{eq:cnscheme1}.
Both the ansatz and the test spaces are nested under mesh refinements with trisections ($k=2)$ or bisections ($k>2$).
\begin{lemma}\label{lem:nested}
    Let $\TT'$ denote a refinement of $\TT$, then $\XX_\TT\subseteq \XX_{\TT'}$. If additionally all elements of $\TT_T$ or their successors are in $\TT'$ for all $T\in \TT\setminus\TT'$, there also holds $\YY_{\TT}\subseteq \YY_{\TT'}$.
\end{lemma}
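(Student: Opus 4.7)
The plan is to unpack the definitions of $\XX_\TT$ and $\YY_\TT$ and reduce nestedness to a local statement about one element $T \in \TT \setminus \TT'$ at a time. Throughout, the key algebraic fact I intend to exploit is that $\Phi_T$, by construction, is piecewise constant on the trisection $\TT_T$, so once $T$ is split into its three equal children in $\TT'$, the point-evaluation basis function at $T$ can be rewritten as a linear combination of element-indicator functions in $\YY_{\TT'}^{\rm mean}$.

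For $\XX_\TT \subseteq \XX_{\TT'}$ I would just check: a function $v \in \SS^2(\TT;V)$ is globally continuous and polynomial of degree $\leq 2$ on each $T \in \TT$; since every element of $\TT'$ is contained in some element of $\TT$, $v$ is also polynomial of degree $\leq 2$ on each element of $\TT'$, hence in $\SS^2(\TT';V)$. This step should be entirely routine and needs no further assumption on the type of refinement.

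For $\YY_\TT \subseteq \YY_{\TT'}$ I would argue basis function by basis function. Fix $T \in \TT$ and $v \in V$. If $T \in \TT'$, then by definition $v\chi_T \in \YY_{\TT'}^{\rm mean}$ and $v\Phi_T \in \YY_{\TT'}^{\rm point}$, since $\Phi_T$ depends only on the element $T$ and not on the surrounding mesh. If $T \in \TT \setminus \TT'$, use the hypothesis $\TT_T \subseteq \TT'$: writing $\TT_T = \{T_1, T_2, T_3\}$ with $|T_i| = |T|/3$, one gets $\chi_T = \chi_{T_1} + \chi_{T_2} + \chi_{T_3}$ and, from the explicit formula for $\Psi_{[0,1]}$ together with scaling, $\Phi_T = |T|(\chi_{T_1} - \tfrac{7}{2}\chi_{T_2} + \tfrac{11}{2}\chi_{T_3})$. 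Multiplying by $v$ places $v\chi_T$ and $v\Phi_T$ both in $\YY_{\TT'}^{\rm mean} \subseteq \YY_{\TT'}$. Summing over basis elements yields $\YY_\TT \subseteq \YY_{\TT'}$.

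The only mildly delicate point is making sure that the definition of $\Phi_T$ is intrinsic to the element $T$ (so that $v\Phi_T$ is literally a basis element of $\YY_{\TT'}^{\rm point}$ whenever $T \in \TT \cap \TT'$, regardless of whether the children in $\TT_T$ happen to lie in $\TT'$). This is immediate from the defining relation $\int_T \Phi_T w\,ds = |T|\,w(t_{T+1})$ for $w\in\PP^2(T)$, which does not reference the mesh, so no obstacle arises there. I do not foresee any real difficulty; the lemma is essentially a bookkeeping statement once one notices that $\Phi_T$ lives in $\PP^0(\TT_T)$.
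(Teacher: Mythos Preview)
Your approach is correct and essentially identical to the paper's proof: both reduce to the observation that $\Phi_T \in \PP^0(\TT_T)$, so once $\TT_T \subseteq \TT'$ one has $v\Phi_T \in \YY_{\TT'}^{\rm mean}$, while the cases $T \in \TT \cap \TT'$ and the $\YY_\TT^{\rm mean}$ part are immediate. One cosmetic slip: the explicit formula should read $\Phi_T = \chi_{T_1} - \tfrac{7}{2}\chi_{T_2} + \tfrac{11}{2}\chi_{T_3}$ without the $|T|$ prefactor (since $\Phi_T = |T|\Psi_T$ and $\Psi_T$ already carries a $|T|^{-1}$ scaling relative to $\Psi_{[0,1]}$), but this does not affect the argument.
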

\begin{proof}
    The ansatz spaces $\XX_\TT$ and
    the $\YY_\TT^{\rm mean}$ parts of the test spaces are nested by definition. 
    Let $v\Phi_T\in \YY_\TT^{\rm point}$. If $T\in \TT'$, there holds $v\Phi_T\in \YY_{\TT'}$. If $T\in \TT\setminus\TT'$, we know $\TT_T\subseteq \TT'$.
    Since $v\Phi_T\in v\PP^{k-2}(\TT_T)\subseteq \YY_{\TT'}^{\rm mean}\subseteq \YY_{\TT'}$, we conclude the proof.
\end{proof}

Note that all formulations of~\eqref{eq:cnscheme} are equivalent to the Radau IIA method and hence well-posed.  E.g., for $k=2$, the method  requires only a local solve with the operator
\begin{align*}
    \widetilde A &=\begin{pmatrix}
        I+\frac{5|T|}{12}A & -\frac{|T|}{12}A\\
        \frac{3|T|}{4}A& I+\frac{|T|}{4}A
    \end{pmatrix}: V \times V \to V^\star \times V^\star.
\end{align*}
The fact that $\widetilde A$ is invertible follows from a simple inf-sup argument, using the coercivity of $A$.

There are multiple possibilities in writing Radau IIA schemes as Petrov--Galerkin schemes. We give the following alternative with non-nested test-spaces introduced in~\cite{andreev_rk}. Define $\widetilde\Psi_T \in \PP^k(T)$ as the representer of the point-evaluation at $t_{T+1}$, i.e. the unique function in $\PP^k(T)$ satisfying $\int_T \widetilde\Psi_T v \ ds = v(t_{T+1})$ for all $v \in \PP^k(T)$. Since Radau nodes of order $k+1$ are exact up to polynomials of order $2k$, we see that
$\int_T v(t) \prod_{i=1}^k(t-\tau_i^{T,k+1})\,dt= \omega_{k+1}\prod_{i=1}^k(t_{T+1}-\tau_i^{T,k+1}) v(t_{T+1})$, where $\omega_{k+1}$ denotes the Radau weight corresponding to $\tau_{k+1}^{T,k+1}$. This shows $\widetilde\Psi_T(t) \sim \Pi_{i=1}^k(t-\tau_i^{T,k+1})$. Denote $\widetilde\Phi_T \coloneqq |T|\widetilde\Psi_T$. With
\begin{align*}
    \widetilde\YY_\TT :={\rm span}\set{v\widetilde\Phi_T}{T\in\TT,\,v\in V}+{\rm span}\set{v\chi}{\chi\in \PP^{k-2}(\TT),\,v\in V},
\end{align*}
the method reads: Find $u_\TT \in \XX_\TT$ such that 
\begin{align}\label{eq:cnscheme3}
\int_0^\tend \dual{(\partial_t+A)u_\TT}{v}\,ds + \dual{u_\TT(0)}{w} &= \int_0^\tend \dual{f}{v}\,ds +\dual{u_0}{w},
\end{align}
\end{subequations}
for all $ (w,v)\in H\times\widetilde\YY_\TT$, which is equivalent to~\eqref{eq:cnscheme2}. Note that in this formulation, we may replace $f_\TT$ by $f$, since $\widetilde \YY_\TT$ is in the range of $\Pi^k_\TT$. The equivalence is quantified in the next Lemma.
\begin{lemma}\label{lem:mod_space}
    There is a uniform constant $C>0$ such that for every $\widetilde v \in \widetilde\YY_\TT$ there is a test function $v \in \YY_\TT$ such that 
    \begin{align}\label{eq:same_bilinear}
        \int_0^\tend \dual{(\partial_t+A)u_\TT}{v}\,ds= \int_0^\tend \dual{(\partial_t+A)u_\TT}{\widetilde v}\,ds\quad\text{for all }u_\TT \in \XX_\TT.
    \end{align}    
    and $C^{-1}\norm{\widetilde v}{\YY}\leq \norm{v}{\YY}\leq C \norm{\widetilde v}{\YY}$.
\end{lemma}
\begin{proof}
    One can construct $v$ elementwise for all $T\in\TT$. Let $\widetilde v|_T=v_1 \widetilde \Phi_T+v_2 \chi$ for some $\chi \in \PP^{k-2}(T)$. With $v|_T:=v_1 \Phi_T+v_2 \chi$, we immediately show~\eqref{eq:same_bilinear}. Furthermore, scaling and norm equivalence on finite dimensional spaces show
    \begin{align*}
        \norm{v}{L^2(T,V)}&\simeq \norm{v_1}{V}\norm{\Phi_T}{L^2(T)}+\norm{v_2}{V}\norm{\chi}{L^2(T)}\quad\text{and},\\
          \norm{\widetilde v}{L^2(T,V)}&\simeq \norm{v_1}{V}\norm{\widetilde\Phi_T}{L^2(T)}+\norm{v_2}{V}\norm{\chi}{L^2(T)}.
    \end{align*}
    Thus, $\norm{\widetilde v}{L^2(T,V)}\simeq \norm{v}{L^2(T,V)}$ follows immediately from $0<\norm{\Phi_T}{L^2(T)}/ \norm{\widetilde\Phi_T}{L^2(T)}<\infty$, which depends only on $k$.
    This concludes the proof.
\end{proof}
\subsection{Error estimator in time}
As $V$ is assumed to be finite dimensional
we immediately have that $u_0\in V$.
We use a residual-based error estimator defined as
\begin{align*}
    \eta_{\TT}^2&=\sum_{T \in \TT}\eta_{\TT}(T)^2, \\
    \eta_{\TT}(T)^2&=|T|^2 \norm{\partial_t f-\partial_t^2u_{\TT} -\partial_t Au_{\TT}}{L^2(T,V^\star)}^2,
\end{align*}
where $u_{\TT}$ solves~\eqref{eq:cnscheme}. 
The fact that this estimator is an upper bound for the error and, moreover, fits into the framework of adaptive mesh refinement, relies on the inf-sup stability of the time stepping (shown below) and arguments from~\cite{infsup}. We note that the data approximation $f_\TT$ does not enter the error estimator $\eta_\TT$ and we also do not need an explicit data oscillation term for computation.
An inverse inequality, a Poincar\'e estimate, and Lemma~\ref{lem:ortho} below show that the modified estimator
\begin{align*}
    \widetilde \eta_\TT(T)^2 := \norm{f-\partial_tu_{\TT} -Au_{\TT}}{L^2(T,V^\star)}^2 +|T|^2\norm{\partial_t (1-\Pi_T^k)f}{L^2(T;V^\star)}^2
\end{align*}
satisfies $\eta_\TT(T)\simeq \widetilde \eta_\TT(T)$ for all $T\in\TT$ and universal equivalence constants. Thus, $\widetilde \eta_\TT(T)$ can be used equivalently in Algorithm~\ref{alg:timeadapt} below, while still being optimal in the sense of Theorem~\ref{thm:optimal_timestepping}.

\subsection{The adaptive algorithm}
\label{sec:adaptive_algorithm}
The adaptive time stepping algorithm (Algorithm~\ref{alg:timeadapt}) uses the standard adaptive loop known from stationary mesh refinement and selects elements for refinement by D\"orfler marking. The only caveat is that, for $k=2$, we require that if an element $T$ gets refined, it is split into three equal parts $\TT_T=\{T_1,T_2,T_3\}$. 
Moreover, we require that the mesh elements do not grow too fast in size in forward time direction, i.e., there exists $G_0>1$ with 
\begin{align}\label{eq:graded}
   |T_{i+1}|/|T_i| \leq G_0 \quad\text{for all }T_i, T_{i+1} \in\TT.
\end{align}
Such a restriction is not uncommon even in classical adaptive time stepping literature.
We note that Theorem~\ref{thm:optimal_timestepping} below shows that the adaptive algorithm is optimal even with respect to the unconstrained class of meshes without grading.
The algorithm expects the user to input initial time steps $\TT_0$,
the marking parameter $0<\theta<1$, and the grading parameter
\begin{align*}
    g_0\geq |T_{i+1}^0|/|T_i^0| \quad \text{ for all } T_i^0, T_{i+1}^0 \in \TT_0.
\end{align*}
For the optimality results below, we require $\theta$ to be sufficiently small. We will show in Lemma~\ref{lem:closure} below, that the refinement step in Algorithm~\ref{alg:timeadapt} ensures the grading condition~\eqref{eq:graded} for all $\ell\in\N$ with $G_0=3g_0$ for $k=2$ and $G_0=2g_0$ for $k>2$.
\begin{algorithm}[H]
\caption{Adaptive time stepping}\label{alg:timeadapt}
\begin{algorithmic}[1]
\For{$\ell=0,1,2,\ldots$}
    \State Solve~\eqref{eq:cnscheme} to obtain $u_\ell:=u_{\TT_\ell}$
    \State Compute $\eta_\ell(T):= \eta_{\TT_\ell}(T)$ for all $T\in\TT_\ell$
    \State Find set of minimal cardinality $\MM_{\ell}\subseteq \TT_\ell$ such that
    \begin{align*}
        \sum_{T\in\MM_{\ell}}\eta_\ell(T)^2\geq \theta \eta_\ell^2
    \end{align*}
    \State Set $\widehat \MM_\ell = \MM_{\ell}$
    \Do
    \State Define $\UU:=\set{T_{i+1}\in\TT_\ell\setminus\widehat\MM_\ell}{T_i \in \widehat\MM_{\ell} \text{ and }|T_{i+1}|/|T_i|>g_0}$
    \State Set $\widehat\MM_{\ell}:=\widehat\MM_{\ell}\cup \UU$
    \doWhile{$\UU\neq \emptyset$}
    \State Set $\TT_{\ell+1}:=\bigcup_{T\in\widehat{\MM_\ell}}\TT_T\cup (\TT_\ell\setminus\widehat\MM_\ell)$.
\EndFor
\end{algorithmic}
\end{algorithm}
\section{Optimality of the adaptive time stepping}
In this section, we prove the following main result of this work.
Let $\TT_0$ be some uniform initial mesh, we denote by $\T$ the set of all meshes that can be obtained via recursive bisections (for $k>2$) and trisections (for $k=2$) starting from $\TT_0$ (without any grading). 

\begin{theorem}\label{thm:optimal_timestepping}
    Let the initial mesh $\TT_0$ be uniform. Moreover, let the marking parameter $0<\theta\leq 1$ be sufficiently small. Given $s>0$, define
    \begin{align}\label{eq:approxclass}
        C_{\rm approx}:=\sup_{N\in\N}\inf_{\substack{\TT\in\T\\\#\TT-\#\TT_0\leq N}} \eta_\TT N^s.
    \end{align}
    The adaptive algorithm produces a sequence of meshes $\TT_\ell$ and solutions $u_\ell\in\XX_{\TT_\ell}$ that satisfy
    \begin{align*}
       \eta_\ell\leq C_{\rm opt}C_{\rm approx} (\#\TT_\ell)^{-s}\quad\text{for all }\ell\in\N,
    \end{align*}
    where the constant $C_{\rm opt}$ is independent of $\ell$, $u$, and the dimension of $V$.
\end{theorem}
\begin{remarkN}
    Note that the optimality result of Theorem~\ref{thm:optimal_timestepping} can equivalently be formulated in terms of the \emph{total error}
    \begin{align*}
        \norm{u-u_\ell}{\XX} + {\rm osc}_{\TT_\ell}(f) \simeq \eta_\ell,
    \end{align*}
    where the equivalence and the data oscillation term ${\rm osc}_\TT(f)$ is from Lemma~\ref{lem:reliability} below.
\end{remarkN}

\begin{remarkN}\label{rem:dualnorm}
The natural norm for the parabolic problem is $\norm{\cdot}{L^2(0,\tend;H^1_0(\Omega))}+\norm{\partial_t(\cdot)}{L^2(0,\tend;H^{-1}(\Omega))}$, while our convergence result in Theorem~\ref{thm:optimal_timestepping} uses the $\XX$-norm which replaces $H^{-1}(\Omega)$ with $V^\star$. The critical equivalence constant $\norm{\partial_t u}{L^2(0,\tend;V^\star)}\simeq \norm{\partial_t u}{L^2(0,\tend;H^{-1}(\Omega))}$ is given by the $H^1(\Omega)$-stability constant of the $L^2(\Omega)$-orthogonal projection onto $V$, see~\cite{TV2016}, which may depend on the dimension of $V$ and thus reintroduce a CFL-like condition into our analysis. However, if $V$ is a standard polynomial FEM space based on a uniform mesh, this constant is well-known to be independent of the mesh-size.
    For adaptively refined meshes created by newest-vertex bisection, the recent works~\cite{DieGrading1,DieGrading2} show the stability for a practically relevant range of polynomial degrees independently of the mesh-size. In these cases, all results involving the $\XX$ norm, which depends on the discrete subspace $V$, still hold with $H^{-1}(\Omega)$ instead of $V^\star$ in the norm definition.
\end{remarkN}
\subsection{Inf-sup stability}
The Radau IIA schemes above can be seen as an extension of the standard Crank-Nicolson scheme, which is inf-sup stable only under a CFL condition~\cite{infsup}. The reason for this is that the Crank-Nicolson scheme is not $L$-stable and hence does not \emph{see} $\mathcal{O}(1)$-amplitude oscillations over the whole time interval (it is easy to construct a function in $\SS^1(\TT;V)$ that is $\mathcal{O}(1)$ in the maximum norm but has vanishing integral mean on each element). The Radau IIA schemes on the other hand are $L$-stable. One could argue that the non-symmetric (from the element in time point of view) modification with the point evaluation at $t_{T+1}$ will dampen such oscillations sufficiently fast. This behavior is quantified in the next theorem. While the implicit Euler scheme would have the same property, we could not find a way to realize it as an inf-sup stable Galerkin method with ansatz and test spaces that are nested under mesh refinement.

\begin{theorem}\label{thm:infsup}
Let $V$ be defined as above and let $\TT$ be a moderately graded time-mesh with $g_0$ from~\eqref{eq:graded}. Then, there exists $c_0>0$ such that
\begin{align*}
\inf_{u\in\XX_{\TT}\setminus\{0\}}\sup_{(v,w)\in (\YY_\TT\times H)\setminus\{0\}}&\frac{\int_0^\tend \dual{(\partial_t + A)u}{v}\,ds +\dual{u(0)}{w}}{\norm{u}{\XX}(\norm{v}{\YY}+\norm{w}{H})}\\
&\qquad \qquad + \frac{|T_1|^{\frac12}\norm{u(0)}{V}}{\norm{u}{\XX}} \geq c_0>0.
\end{align*}
The constant $c_0$ is independent of $\TT$  and $T_1$ is the first time-interval in $\TT$. Moreover, the result also holds when replacing $\YY_\TT$ by $\widetilde\YY_\TT$.
\end{theorem}
\begin{proof}
    We first note that the result follows from the following inf-sup stability 
    \begin{align*}
\inf_{u\in\XX_{\TT}\setminus\{0\}}\sup_{(v,w)\in (\widetilde\YY_\TT\times H)\setminus\{0\}}&\frac{\int_0^\tend \dual{(\partial_t + A)u}{v}\,ds +\dual{u(0)}{w}}{\norm{u}{\XX}(\norm{v}{\YY}+\norm{w}{H})}\\
&\qquad\qquad+ \frac{|T_1|^{\frac12}\norm{u(0)}{V}}{\norm{u}{\XX}} \geq \widetilde{c_0}>0.
\end{align*}
with respect to the modified test space $\widetilde\YY_\TT$ due to Lemma~\ref{lem:mod_space}. To show this inf-sup stability, let $u \in  \XX_\TT \setminus \{0\}$. Denote the elementwise Lagrange interpolation operator at the $k$ right Radau quadrature points $\tau_1^T,\ldots,\tau_k^T$ by $I:\PP^{k}(\TT,V)\to \PP^{k-1}(\TT,V)$.
Following the analysis in \cite[Theorem 3.3]{andreev_rk} and \cite[Section 3.2.2]{andreev_rk} one gets
\begin{align*}
    \Bigg(\sup_{(v,w)\in (\widetilde\YY_\TT\times H)\setminus\{0\}}&\frac{\int_0^\tend \dual{(\partial_t + A)u}{v}\,ds +\dual{u(0)}{w}}{\norm{v}{\YY}+\norm{w}{H}}\Bigg)^2 +|T_1|\norm{u(0)}{V}^2 \\
    &\gtrsim \norm{\partial_t u}{L^2(0,\tend;V^\star)}^2+\norm{I u}{L^2(0,\tend,V)}^2 + |T_1|\norm{u(0)}{V}^2.
\end{align*}
Note that we do not require the self-duality constant defined in~\cite[Equation~27]{andreev_rk} as we use the discrete dual norm, see also Remark~\ref{rem:dualnorm}.
Following Section 3.4 in \cite{andreev_rk}, one gets
\begin{align*}
    \norm{u-Iu}{L^2(T_i;V)}^2\leq \frac{2k}{3}\Big(\frac{|T_{i}|}{|T_{i-1}|}\norm{Iu}{L^2(T_{i-1};V)}^2+\norm{Iu}{L^2(T_{i};V)}^2\Big)
\end{align*}
for all $T_i\in\TT$ with $i>1$. For $T_1$ one obtains
\begin{align*}
     \norm{u-Iu}{L^2(T_1;V)}^2\leq |T_1| \norm{(u-Iu)(0)}{V}^2\lesssim |T_1|\norm{u(0)}{V}^2+ \norm{Iu}{L^2(T_1;V)}^2.
\end{align*}
This implies
\begin{align*}
    \norm{u}{L^2(0,\tend;V)}^2 &\lesssim \norm{Iu}{L^2(0,\tend;V)}^2+ \sum_{T_i\in\TT} \norm{u-Iu}{L^2(T_i;V)}^2
    \\
    &\lesssim kG_0
    \norm{Iu}{L^2(0,\tend;V)}^2 + |T_1|\norm{u(0)}{V}^2
\end{align*}
and hence shows
\begin{align*}
    \norm{\partial_t u}{L^2(0,\tend;V^\star)}^2+\norm{I u}{L^2(0,\tend,V)}^2 + |T_1|\norm{u(0)}{V}^2\gtrsim \norm{u}{\XX}^2,
\end{align*}
where the implied constant depends only on the grading $G_0$ and the polynomial degree $k$. This concludes the proof.
\end{proof}
\begin{remarkN}
    The extra initial data term in the inf-sup stability of Theorem~\ref{thm:infsup} cannot be avoided. The reason for this is that an initial condition with large $V$-norm will be present for at least the first time interval as that is the minimum time required to dampen large eigenvalues (as opposed to the continuous case, where the $V$-norm of the initial condition does not appear).
\end{remarkN}

The inf-sup stability implies the C\'ea lemma.
\begin{corollary}\label{cor:cea}
    Under the assumptions of Theorem~\ref{thm:infsup},  there is a unique solution $u_\TT$ of~\eqref{eq:cnscheme} which satisfies 
    \begin{align*}
       \norm{u-u_\TT}{\XX}\leq \sqrt{2}\frac{\norm{A}{V\to V^\star}}{c_0} \inf_{\substack{v_\TT\in \XX_\TT\\v_\TT(0)=u_\TT(0)}}\norm{u-v_\TT}{\XX}.
    \end{align*}
\end{corollary}
\begin{proof}[Proof]
Theorem \ref{thm:infsup} immediately gives a unique solution $\widetilde u_\TT$ of the homogeneous problem  i.e.,~\eqref{eq:cnscheme} with zero initial condition and $\widetilde f:= f-Au_0\in H^1(0,\tend;V^\star)$ instead of $f$.  It follows that $\widetilde u_\TT+u_0$ solves \eqref{eq:cnscheme}. Theorem \ref{thm:infsup} shows further, that this solution must be unique.
To see the C\'ea estimate, we consider the test space $\widetilde \YY_\TT$ used in \eqref{eq:cnscheme3}.
As using $\widetilde\YY_\TT$ allows to drop the $L^2$-projection in the right-hand side,
we now use the nice idea from~\cite[Theorem~2]{cea}:
Theorem~\ref{thm:infsup} (which also applies to $\widetilde\YY_\TT$ as a test space) shows that the Galerkin projection $P_\TT\colon \XX_0:=\set{v\in\XX}{v(0)=0}\to \XX_{\TT,0}:=\set{v\in\XX_\TT}{v(0)=0}$ is bounded in the sense $\norm{P_\TT}{\XX_0\to \XX_0}\leq \sqrt{2}\norm{A}{V\to V^\star}/c_0$. Since $\norm{1-P_\TT}{\XX_0\to \XX_0}=\norm{P_\TT}{\XX_0\to \XX_0}$ for any bounded projection on a Hilbert space, we conclude $\norm{u-u_\TT}{\XX} = \norm{(1-P_\TT)(u-v_\TT)}{\XX} \leq \sqrt{2}\norm{A}{V\to V^\star}/c_0 \norm{u-v_\TT}{\XX}$ for all $v_\TT\in\XX_\TT$ with $v_\TT(0)=u_\TT(0)=u(0)$ and thus the proof.
\end{proof}
\begin{remarkN}
Note that the restriction $v_\TT(0)=u_\TT(0)$ in Corollary~\ref{cor:cea} is necessary, as one can see from the simple case $A=\lambda$, $f=0$, $u_0=1$, and $V=H=\R$ for sufficiently large $\lambda$, where $v_\TT=0$ is a much better approximation to $u(t)=e^{-\lambda t}$ than any function $v_\TT\in\XX_\TT$ with $v_\TT(0)=1$.
\end{remarkN}

\section{Properties of the error-estimator}\label{sec:errest}
We aim to show the sufficient properties from~\cite{carstensen_2014} of the error estimator for the optimality of the adaptive algorithm.
Recall the definitions $\T$ (meshes obtained from $\TT_0$ via recurrent trisection $k=2$ or bisection $k>2$) and $\T^{\rm grad}\subset \T$ (meshes that additionally respect the grading condition~\eqref{eq:graded}).

We first require a small separation property that allows us to avoid an extra data oscillation term.
\begin{redlemma}\label{lem:ortho}
    Given $f\in H^1(0,\tend;V^\star)$ and $w\in \PP^{k-1}(\TT;V^\star)$, there holds
    \begin{align*}
        \norm{\partial_t(1-\Pi_\TT^k)f}{L^2(T;V^\star)}^2 +  \norm{w}{L^2(T;V^\star)}^2 \leq C\norm{\partial_t(1-\Pi_\TT^k)f+w}{L^2(T;V^\star)}^2
    \end{align*}
    for all $T\in\TT$. The constant $C>0$ is universal.
\end{redlemma}
\begin{proof}[Proof]
A scaling argument reduces the situation to the case $T=[0,1]$.
    Assume the statement is wrong. Then, we find sequences $f_n\in  H^1(T;V^\star)$ and $w_n\in \PP^{k-1}(T;V^\star)$ such that 
    $\norm{w_n}{L^2(T,V^\star)}+\norm{f_n}{H^1(T;V^\star)}=1$, $f_n\perp_{L^2(T;V^\star)} \PP^k(T;V^\star)$, and
    \begin{align*}
        \frac{1}{n}\geq \frac{1}{n}\big(\norm{\partial_t f_n}{L^2(T;V^\star)}^2 +  \norm{w_n}{L^2(T;V^\star)}^2\big) >\norm{\partial_tf_n+w_n}{L^2(T;V^\star)}^2.
    \end{align*}
    This implies for subsequences (that we do not distinguish notationally) $f_n\rightharpoonup f \in H^1(T;V^\star)$ with $f\perp_{L^2(T;V^\star)} \PP^k(T;V^\star)$ and $f_n\to f\in L^2(T;V^\star)$.
    Moreover, $\partial_t f_n +w_n\to 0$ in $L^2(T;V^\star)$ and therefore $w_n\rightharpoonup w\in L^2(T;V^\star)$ and $w=-\partial_t f$. As $\PP^{k-1}(T,V^\star)$ is weakly closed it holds that $w \in \PP^{k-1}(T,V^\star)$, and therefore $f = w = 0$.
    Since $\PP^k(T;V^\star)$ is finite dimensional, we even have $\norm{w_n}{L^2(T;V^\star)}\to 0$ and thus $\norm{\partial_t f_n}{L^2(T;V^\star)}\to 0$. This yields the contradiction $1=\norm{w_n}{L^2(T,V^\star)}+\norm{f_n}{H^1(T;V^\star)}\to 0$.
\end{proof}

The following lemma shows that the error estimator is an upper (also in a discrete sense) and lower bound for the error. The upper bounds are an important ingredient for the optimality Algorithm~\ref{alg:timeadapt}, while the lower bound is required to show that the approximation class defined in~\eqref{eq:approxclass} can equivalently be defined in terms of the total error $\norm{u-u_\TT}{\XX}+{\rm osc}_\TT(f)$ instead of $\eta_\TT$.
\begin{lemma}\label{lem:reliability}
The estimator is reliable and efficient in the sense
    \begin{align}\label{eq:rel}
        C_{\rm eff}^{-1}\eta_\TT\leq \norm{u-u_\TT}{\XX}+{\rm osc}_\TT(f) \leq C_{\rm rel} \eta_{\TT},
    \end{align}
    where the data oscillations are defined by
    \begin{align*}
        {\rm osc}_\TT(f)^2:=\sum_{T\in\TT}|T|^2\norm{\partial_t (1-\Pi_\TT^k)f}{L^2(T;V^\star)}^2
    \end{align*}
    for $\TT \in \T$.
     For a refinement $\widehat\TT \in \T^{\rm grad}$ of $\TT \in \T$, there holds discrete reliability,
     \begin{align}\label{eq:discrel}
         \norm{u_{\widehat\TT}-u_\TT}{\XX}^2 \leq C_{\rm drel}\sum_{T \in \TT \setminus \widehat\TT} \eta_{\TT}(T)^2.
     \end{align}
    The constants $C_{\rm eff}$, $C_{\rm rel}$, $C_{\rm drel}>0$ do not depend on $\TT$ or $\widehat\TT$.
\end{lemma}
\begin{proof}
We start with discrete reliability:
  By Theorem \ref{thm:infsup}, we have that
  \begin{align*}
     c_0 \norm{u_{\widehat\TT}-u_\TT}{\XX} \leq \sup_{ \widehat{v} \in \YY_{\widehat\TT}\setminus\{0\}}\frac{\int_0^\tend \dual{(\partial_t + A)(u_{\widehat\TT}-u_\TT)}{\widehat{v}}\,dt}{\norm{\widehat{v}}{\YY}},
  \end{align*}
  as $u_{\widehat\TT}(0)=u_{\TT}(0).$ Since, on non-refined elements $T \in \widehat\TT \cap \TT$, we have $\int_T \dual{(\partial_t + A)(u_{\widehat\TT}-u_\TT)}{\widehat{v}}_H\,dt=0$, we get
  \begin{align*}
      c_0 \norm{u_{\widehat\TT}-u_\TT}{\XX} &\leq \sup_{ \widehat{v} \in \YY_{\widehat\TT}\setminus\{0\}}\frac{\sum_{T \in \TT \setminus \widehat\TT}\int_T \dual{f_{\widehat\TT}-(\partial_t + A)u_\TT}{\widehat{v}}\,dt}{\norm{\widehat{v}}{\YY}} \\
      &\leq    \Big(\sum_{T \in \TT \setminus \widehat\TT}\norm{f_{\widehat\TT}-\partial_t u_{\TT}-A u_{\TT}}{L^2(T,V^\star)}^2\Big)^{1/2}.
  \end{align*}
  From~\eqref{eq:cnscheme}, we see that $f_{\widehat\TT}-\partial_t u_\TT -A u_\TT$ has vanishing integral mean on each $T \in \TT $. Hence, a Poincaré inequality shows
  \begin{align*}
      \norm{f_{\widehat\TT}-\partial_t u_{\TT}-A u_{\TT}}{L^2(T,V^\star)}^2 &\lesssim |T|^2 \norm{\partial_t f_{\widehat\TT}- \partial_t^2u_{\TT}-\partial_t Au_{\TT}}{L^2(T,V^\star)}^2\\
      &\lesssim
      |T|^2 \norm{\partial_t f- \partial_t^2u_{\TT}-\partial_t Au_{\TT}}{L^2(T,V^\star)}^2,
  \end{align*}
  where we use Lemma~\ref{lem:ortho} on each element $T'\in\widehat\TT$ with $T'\subseteq T$ for the last estimate.
  This immediately implies~\eqref{eq:discrel}. Applying the same arguments and using Lemma~\ref{lem:infsupcont} instead of Theorem~\ref{thm:infsup} yields $\norm{u-u_\TT}{\XX}\lesssim \eta_\TT$. 
  Moreover, Lemma~\ref{lem:ortho} with $ w= \partial_t f_\TT - \partial_t^2u_\TT- A\partial_t u_\TT\in \PP^{k-1}(\TT;V^\star)$ implies also $ {\rm osc}_\TT(f)\leq \eta_\TT$.
  To see the converse estimate, we use a standard inverse inequality to estimate
  \begin{align*}
      \eta_\TT^2&\lesssim \sum_{T\in\TT}|T|^2\norm{\partial_t (1-\Pi_\TT^k)f}{L^2(T;V^\star)}^2+|T|^2\norm{\partial_t f_\TT-\partial_t^2  u_\TT-A\partial_tu_\TT}{L^2(T;V^\star)}^2\\
      &\lesssim 
      {\rm osc}_\TT(f)^2+\norm{ f_\TT-\partial_t  u_\TT-Au_\TT}{L^2(0,\tend;V^\star)}^2\\
       &\lesssim 
      \sum_{T\in\TT}\norm{(1-\Pi_\TT^k)f}{L^2(T;V^\star)}^2+{\rm osc}_\TT(f)^2+\norm{u-u_\TT}{\XX}^2.
  \end{align*}
  Another Poincaré estimate on each interval shows  $\sum_{T\in\TT}\norm{(1-\Pi_\TT^k)f}{L^2(T;V^\star)}^2\lesssim {\rm osc}_\TT(f)^2$ and concludes the proof.
\end{proof}
The following lemma is the basis for the so-called \emph{estimator reduction}, see~\cite{carstensen_2014} for details.
\begin{lemma}\label{lem:stab_red}
    Let $\widehat\TT \in \T$ be a refinement of $\TT \in \T$. The error estimator satisfies reduction on  refined elements, i.e.
    \begin{align}\label{eq:red}
        \sum_{T \in \widehat\TT \setminus \TT} \eta_{\widehat\TT}(T)^2 \leq q \sum_{T \in \TT \setminus \widehat\TT} \eta_{\TT}(T)^2 + C \norm{u_{\widehat\TT}-u_\TT}{\XX}^2,
    \end{align}
    where $0<q<1$ and $C>0$ do not depend on $\TT$ or $\widehat\TT$ as well as stability on non-refined elements,i.e.
    \begin{align}\label{eq:stab}
        \Big|\Big( \sum_{T \in \widehat\TT \cap \TT} \eta_{\widehat\TT}(T)^2 \Big)^{1/2}- \Big(\sum_{T \in \widehat\TT \cap \TT} \eta_{\TT}(T)^2 \Big)^{1/2} \Big| \leq C \norm{u_{\widehat\TT}-u_\TT}{\XX}.
    \end{align}
\end{lemma}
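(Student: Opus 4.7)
The plan is to treat both inequalities by similar ingredients: the triangle inequality at the level of the $L^2(T;V^\star)$-residuals, followed by polynomial inverse estimates in time on each element where $u_{\widehat\TT}-u_\TT$ is (piecewise) quadratic. Recall that on $T\in\widehat\TT\cap\TT$ both $u_\TT$ and $u_{\widehat\TT}$ are single polynomials of degree $\le 2$ in time, while on the three children $T\subset T'$ of a refined element $T'\in\TT\setminus\widehat\TT$ the function $u_{\widehat\TT}$ is piecewise of degree $\le 2$. For all such fixed-degree polynomial factors $p$, the scaling
\begin{equation*}
\norm{\partial_t p}{L^2(I;V^\star)} \lesssim |I|^{-1}\norm{p}{L^2(I;V^\star)}
\end{equation*}
will be the workhorse.

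For the stability bound, I start from the reverse triangle inequality inside the squared sum $\big(\sum_{T\in\widehat\TT\cap\TT}\eta_{\widehat\TT}(T)^2\big)^{1/2}$ versus the same sum for $\TT$, which reduces the left-hand side to a quantity controlled by
\begin{equation*}
\Big(\sum_{T\in\widehat\TT\cap\TT}|T|^2\big\|\partial_t^2(u_{\widehat\TT}-u_\TT)+\partial_t A(u_{\widehat\TT}-u_\TT)\big\|_{L^2(T;V^\star)}^2\Big)^{1/2}.
\end{equation*}
Since $A$ is a bounded linear operator $V\to V^\star$ commuting with $\partial_t$, applying the inverse estimate once on $\partial_t^2(\cdot)$ and once on $\partial_t A(\cdot)$ removes the $|T|^2$ weight and yields an upper bound of the form $\norm{\partial_t(u_{\widehat\TT}-u_\TT)}{L^2(0,\tend;V^\star)}+\norm{u_{\widehat\TT}-u_\TT}{L^2(0,\tend;V)}\lesssim\norm{u_{\widehat\TT}-u_\TT}{\XX}$. (I will note the discrepancy with the stated squared right-hand side; the natural conclusion is linear in this norm, and the argument actually gives the stronger version.)

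For the reduction~\eqref{eq:red}, I group refined elements of $\widehat\TT$ by their parent $T'\in\TT\setminus\widehat\TT$ so that the three children $T\subset T'$ satisfy $|T|=|T'|/3$. On the refined piece, I add and subtract $\partial_t^2 u_\TT+\partial_t Au_\TT$ and apply Young's inequality $(a+b)^2\le(1+\delta)a^2+C_\delta b^2$ to obtain
\begin{equation*}
\sum_{T\subset T'}\eta_{\widehat\TT}(T)^2 \le (1+\delta)\,\tfrac{1}{9}\eta_\TT(T')^2 + C_\delta\sum_{T\subset T'}|T|^2\big\|\partial_t^2(u_{\widehat\TT}-u_\TT)+\partial_t A(u_{\widehat\TT}-u_\TT)\big\|_{L^2(T;V^\star)}^2,
\end{equation*}
where the factor $1/9$ comes from $|T|^2=|T'|^2/9$ combined with additivity of the squared $L^2(\cdot;V^\star)$-norm over the children (the residual on the $\TT$-side being a single function on $T'$). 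Choosing any $\delta<8$ gives $q=(1+\delta)/9<1$. The perturbation term is treated exactly as in the stability step: the temporal inverse estimate on each child (where $u_{\widehat\TT}-u_\TT$ is of degree $\le 2$) absorbs the $|T|^2$ weights, leaving $\lesssim\norm{u_{\widehat\TT}-u_\TT}{\XX(T')}^2$, and summation over $T'\in\TT\setminus\widehat\TT$ produces the claimed $C\norm{u_{\widehat\TT}-u_\TT}{\XX}^2$. The only genuinely delicate point is keeping track of the interplay between the $|T|^2$ weight in $\eta_\TT(T)^2$ and the two $|T|^{-1}$ factors furnished by the inverse estimates on $\partial_t^2$ and $\partial_t A$; everything else is routine bookkeeping and Cauchy--Schwarz.
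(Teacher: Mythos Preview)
Your proposal is correct and follows essentially the same approach as the paper: split via Young's inequality into a contracted main term plus a perturbation, then use polynomial inverse estimates in time to absorb the $|T|^2$ weights and bound the perturbation by $\norm{u_{\widehat\TT}-u_\TT}{\XX}^2$. One minor point: a general refinement $\widehat\TT$ may subdivide a coarse element $T'\in\TT\setminus\widehat\TT$ into more than three pieces, but since every descendant still satisfies $|T|\leq|T'|/3$ your factor $1/9$ survives unchanged (the paper uses the cruder bound $|T_i|\leq|T|/2$ to get $q=(1+\delta)/4$); your observation that the stability bound \eqref{eq:stab} should be linear rather than squared in $\norm{u_{\widehat\TT}-u_\TT}{\XX}$ is also correct.
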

\begin{proof}
    Let $T \in \TT \setminus \widehat\TT$ and let $T_1,\ldots,T_n \in \widehat\TT$ such that $T=\bigcup_{i=1}^n T_i$. It holds that $|T_i|\leq |T|/2$. We have for all $\delta>0$ that
    \begin{align*}
        \sum_{i=1}^n&\eta_{\widehat\TT}(T_i)^2=|T_1|^2\norm{\partial_t f - \partial_t^2 u_{\widehat\TT}-A\partial_t u_{\widehat\TT}}{L^2(T,V^\star)}^2\\
        &\leq (1+\delta)\frac{|T|^2}{4}\norm{\partial_t f - \partial_t^2 u_{\TT}-A\partial_t u_{\TT}}{L^2(T,V^\star)}^2\\
        & + 2|T_1|^2(1+\delta^{-1}) \Big(\norm{\partial_t^2 u_{\widehat\TT}-\partial_t^2 u_{\TT}}{L^2(T,V^\star)}^2 +\norm{A\partial_t u_{\widehat\TT}-A\partial_t u_{\TT}}{L^2(T,V^\star)}^2 \Big).
    \end{align*}
    Standard inverse estimates for polynomials yield
    \begin{align*}
        |T_1|^2\norm{\partial_t^2 &u_{\widehat\TT}-\partial_t^2 u_{\TT}}{L^2(T,V^\star)}^2 +|T_1|^2\norm{A\partial_t u_{\widehat\TT}-A\partial_t u_{\TT}}{L^2(T,V^\star)}^2\\
        &\lesssim \norm{\partial_t u_{\widehat\TT}-\partial_t u_{\TT}}{L^2(T,V^\star)}^2 +\norm{A u_{\widehat\TT}-Au_{\TT}}{L^2(T,V^\star)}^2 
        = \norm{u_{\widehat\TT}-u_{\TT}}{\XX(T)}^2,
    \end{align*}
    which yields the result on $T$. Summing up over all $T \in \TT \setminus \widehat\TT$ gives \eqref{eq:red}. The second statement follows analogously.
\end{proof}
We restate the quasi-monotonicity of the error estimator for convenience.
\begin{redlemma}[{\cite[Lemma~3.5]{carstensen_2014} }]\label{lem:qmon}
    Given $\TT \in \T$ and $\TT'\in\T^{\rm grad}$ such that $\TT'$ is a refinement of $\TT$. Then, there holds
    \begin{align*}
        \eta_{\TT'}\leq C\eta_\TT
    \end{align*}
    for a constant $C>0$ that depends only on the constants from Lemmas~\ref{lem:reliability}--\ref{lem:stab_red}.
\end{redlemma}

\subsection{Proof of the main result}

The idea of the following optimality proof is to use the work~\cite{infsup} together with Lemma~\ref{lem:nested} and Theorem~\ref{thm:infsup} to show \emph{general quasi-orthogonality} of the adaptive algorithm.
Lemma~\ref{lem:closure} below shows that our mesh refinement algorithm satisfies the so-called \emph{closure estimate}. 
 Together with other properties of the error estimator shown in Section~\ref{sec:errest},~\cite{infsup} and the arguments in~\cite{carstensen_2014} then show the optimality result in Theorem~\ref{thm:optimal_timestepping}. We start with \emph{general quasi-orthogonality} in the following lemma.

\begin{redlemma}\label{lem:qo}
The discrete solutions of~\eqref{eq:cnscheme} satisfy general quasi-orthogonality 
\begin{align*}
         \sum_{m=\ell}^{\ell+N} \norm{u_{m+1}-u_m}{\XX}^2\leq C(N)\eta_\ell^2\quad\text{for all }\ell,N\in\N
    \end{align*}
    for some function $C(N)=o(N)$ as $N\to \infty$ that only depends on the inf-sup stability constant $c_0$ from Theorem~\ref{thm:infsup} and the operator $A$.
\end{redlemma}
\begin{proof}[Proof]
    We consider the auxiliary bilinear form
    \begin{align*}
        b((u,g),(v,w)):= \int_0^\tend \dual{(\partial_t +A)u}{v} + \dual{g}{w-v}\, ds
    \end{align*}
    for functions $u\in \XX_\TT$ with $u(0)=0$, $g\in \PP^k(\TT;V^\star)\subset L^2(0,\tend;V^\star)$, $w\in \PP^k(\TT;V)\subset \YY$, and $v\in \YY_\TT$.
    We first show that $b(\cdot,\cdot)$ is inf-sup stable.
    Given $(u,g)\in\XX_\TT\times \PP^k(\TT;V^\star)$ with $u(0)=0$, Theorem~\ref{thm:infsup} provides a test function $v\in\YY_\TT$, which we scale to satisfy $\norm{v}{\YY}=\norm{u}{\XX}$.
    We define $w_v\in \PP^k(\TT;V)$ as the unique solution of
    \begin{align*}
        \int_0^\tend \dual{w'}{w_v}\,ds = \int_0^\tend\dual{w'}{v}\,ds \quad\text{for all }w'\in \PP^k(\TT;V^\star).
    \end{align*}
    Testing with $w':= Aw_v$ shows immediately $\norm{w_v}{\YY}\lesssim \norm{v}{\YY}=\norm{u}{\XX}$.
    Thus, $w:= w_v+A^{-1} g\in \PP^k(\TT;V)$ satisfies
    \begin{align*}
        b((u,g),(v,w))&= \int_0^\tend \dual{(\partial_t +A)u}{v} + \dual{g}{w_v-v + A^{-1}g}\, ds\\
        &\gtrsim \norm{u}{\XX}^2 + \norm{g}{L^2(0,\tend;V^\star)}^2\\
        &\gtrsim (\norm{u}{\XX} + \norm{g}{L^2(0,\tend;V^\star)})(\norm{v}{\YY} + \norm{w}{\YY}),
    \end{align*}
    where we used $\norm{w}{\YY}\lesssim  \norm{w_v}{\YY} + \norm{g}{L^2(0,\tend;V^\star)}\lesssim \norm{u}{\XX} + \norm{g}{L^2(0,\tend;V^\star)}$.
    This shows that $b(\cdot,\cdot)$ is inf-sup stable. Thus, we can write the discrete method~\eqref{eq:cnscheme} equivalently as: Find $(\widetilde u_\TT,\widetilde f_\TT)\in \XX_\TT\times \PP^k(\TT;V^\star)$ such that $\widetilde u_\TT(0)=0$ and
    \begin{align*}
        b((\widetilde u_\TT,\widetilde f_\TT),(v,w))=\int_0^\tend\dual{f-Au_0}{w}\,ds\quad\text{for all } (v,w)\in \YY_\TT\times \PP^k(\TT;V),
    \end{align*}
    where $u_\TT = \widetilde u_\TT + u_0$ solves~\eqref{eq:cnscheme} and $\widetilde f_\TT = f_\TT - Au_0$. The result~\cite[Lemma~9]{infsup} applies to this setting and shows (with $\widetilde f_m:=\widetilde f_{\TT_m}$)
    \begin{align*}
        \sum_{m=\ell}^{\ell+N} \norm{\widetilde u_{m+1}-\widetilde u_m}{\XX}^2 &+ \norm{\widetilde f_{m+1}-\widetilde f_m}{L^2(0,\tend;V^\star)}^2\\
        &\leq C(N)\big(\norm{\widetilde u-\widetilde u_\ell}{\XX}^2 + \norm{\widetilde f-\widetilde f_\ell}{L^2(0,\tend;V^\star)}^2\big),
    \end{align*}
    where $\widetilde u =u-u_0\in\XX$, $\widetilde f=f-Au_0$, and $C(N) = o(N)$ as $N\to \infty$. This and Lemma~\ref{lem:reliability} imply immediately
    \begin{align*}
         \sum_{m=\ell}^{\ell+N} \norm{u_{m+1}-u_m}{\XX}^2\lesssim C(N)\big(\norm{ u-u_\ell}{\XX}^2 + {\rm osc}_\ell(f)^2\big)\lesssim C(N)\eta_\ell^2.
    \end{align*}
\end{proof}

We are finally ready to prove the main result.
\begin{proof}[Proof of Theorem~\ref{thm:optimal_timestepping}]
    Lemma~\ref{lem:graded_equiv} shows that we can replace $\T$ by the class of graded meshes $\T^{\rm grad}$ in~\eqref{eq:approxclass} without changing the optimal rate $s>0$.
    The result~\cite[Lemma~4.7]{carstensen_2014} shows that Lemma~\ref{lem:stab_red} implies \emph{estimator reduction}
    \begin{align*}
        \eta_{\ell+1}^2\leq q\eta_\ell^2 + C\norm{u_{\ell+1}-u_\ell}{\XX}^2
    \end{align*}
    for all $\ell\in\N$ and some constants $0<q<1$ and $C>0$.
    Together with \emph{reliability} from Lemma~\ref{lem:reliability}, \emph{quasi-monotonicity} from Lemma~\ref{lem:qmon}, and \emph{general quasi-orthogonality} from Lemma~\ref{lem:qo},
    the result~\cite[Lemma~6]{infsup} shows \emph{linear convergence}
    \begin{align*}
        \eta_{\ell+m}\leq Cq^m\eta_\ell\quad\text{for all }\ell,m\in\N
    \end{align*}
    for some (different) constants $0<q<1$ and $C>0$.
    The remaining assumptions on the mesh-refinement are shown in Appendix~\ref{sec:closure}. Finally,~\cite[Theorem~4.1]{carstensen_2014} concludes the proof.
\end{proof}

\section{Variable space discretizations}
In this section we want to generalize our timestepping scheme to variable space discretizations and provide an inf-sup stability result. The idea was already hinted at in~\cite{andreev_rk}, but it deserves more rigor: We recall the Gelfand triplet $V \subset H \subset V^\star$ with the difference, that we allow possibly infinite dimensional spaces. We recall the time discretization
$\TT = \set{T_i=[t_{i-1},t_{i}]}{i=1,\ldots,\#\TT,\,t_0=0<t_1<\ldots<t_{\#\TT}=\tend}$,
and associate to each node $t_i$ a finite-dimensional subspace $V^h_i \subset V$. Recall the right Radau quadrature points $\tau^{T,k}_1,\ldots,\tau^{T,k}_k$ associated to the interval $T$. We define the spaces
\begin{align}
    \PP^{k,{\rm var}}(T;V^h_{T},&V^h_{T+1}):=\{v \in \PP^k(T,V^h_{T}+V^h_{T+1}): v(\tau^{T,k}_j) \in V^h_{T+1},\, j =1,\ldots,k\}\\
    \XX_\TT^{\rm var}&:=\{v \in H^1(0,\tend;V): v_{| T} \in  \PP^{k,{\rm var}}(T;V^h_{T}, V^h_{T+1}), \text{ for all } T \in \TT \}.
\end{align}
We equip $ \XX_\TT^{\rm var}$ with the norm $\norm{u}{ \XX_\TT^{\rm var}}^2:= \sum_{T \in \TT} \norm{\partial_t u}{L^2(T;(V_{T+1}^h)^\star)}^2 + \norm{u}{L^2(0,\tend;V)}^2.$ 
To define the right test spaces, we reintroduce the elementwise Lagrange interpolation operator at the $k$ right Radau quadrature points $\tau_1^T,\ldots,\tau_k^T$ by $I:\PP^{k}(\TT,V)\to \PP^{k-1}(\TT,V)$. Furthermore define its elementwise $L^2(T)$ adjoint $I^\star\colon \PP^{k-1}(\TT,V) \to \PP^k(\TT,V)$ via
\begin{align*}
     \int_0^\tend (I^\star v) p \ dt = \int_0^\tend v (I p) \ dt\quad \text{ for all } (v,p) \in \PP^{k-1}(\TT,V) \times \PP^k(\TT,\R).
\end{align*}
The testspace is defined as
\begin{align}\label{eq:defYYvar}
\begin{split}
    \YY^{\rm var}_\TT &:= I^\star I \XX^{\rm var}_\TT\\
    &=\{v \in L^2(0,\tend;V): v_{|T} \in {\rm span}\set{v\widetilde\Phi_T}{v\in V_{T+1}^h}+\PP^{k-2}(T,V_{T+1}^h) \}.
    \end{split}
\end{align}
Note that the characterization of $\YY^{\rm var}_\TT$ is shown for fixed spaces in~\cite[Proposition~2.3]{andreev_rk} but does not directly generalize to variable spaces as is done in the following lemma.
\begin{lemma}
    The second equality in~\eqref{eq:defYYvar} is true.
\end{lemma}
\begin{proof}
We note that $I$ and $I^\star$ act locally on each $T\in\TT$.
We define $\pi_{T,k}:=\prod_{i=1}^{k-1}(t-\tau_i^{T,k})$ as the polynomials that vanish on all but the rightmost Radau points. It has been noted in Section~\ref{sec:PG} above that $\pi_{T,k+1} \sim \widetilde\Phi_T$. Moreover, there holds for $v\in \PP^k(T)$ 
\begin{align*}
    \int_T I^\star \pi_{T,k} v\,dt = \int_{T}\pi_{T,k} Iv\,dt \sim (Iv)(t_{T+1}) = v(t_{T+1}) \sim \int_T \widetilde\Phi_T v\,dt.
\end{align*}
This shows $I^\star \pi_{T,k}\sim \widetilde \Phi_T$.
The same duality argument shows for $w\in \PP^{k-2}(T)$ and $v\in\PP^k(T)$ that $ \int_T I^\star w v\,dt =  \int_T w Iv\,dt = \int_T wv\,dt  $, where the last equality follows from the fact that order $k$ Radau nodes are exact on $\PP^{2k-2}(T)$. This shows $I^\star \PP^{k-2}(T) = \PP^{k-2}(T)$.
Finally, we have the decomposition 
 $\PP^{k-1}(T;V_{T+1}^h) = \PP^{k-2}(T;V_{T+1}^h) + \pi_{T,k}V_{T+1}^h$. Since $I\XX_\TT^{\rm var}$ restricted to one interval $T$ is equal to $\PP^{k-1}(T;V_{T+1}^h)$, the arguments above conclude the proof.
\end{proof}

There holds the following inf-sup stability.
\begin{theorem}
    Let $ \XX_\TT^{\rm var}$ and $\YY_\TT^{\rm var}$ be defined as above and let $\TT$ be a moderately graded time-mesh with $g_0$ from~\eqref{eq:graded}. Then, there exists $\widetilde{c_0}>0$ such that
    \begin{align*}
        \inf_{u \in \XX^{\rm var}_\TT \setminus \{0\}} \sup_{(v,w)\in \YY^{\rm var}_\TT \times V^h_0 \setminus \{0\}} &\frac{\int_0^\tend \dual{(\partial_t + A)u}{v}\,ds +\dual{u(0)}{w}}{\norm{u}{ \XX^{\rm var}_\TT}(\norm{v}{\YY}+\norm{w}{H})}\\
&\qquad\qquad+ \frac{|T_1|^{\frac12}\norm{u(0)}{V}}{\norm{u}{ \XX^{\rm var}_\TT}} \geq \widetilde{c_0}>0.
    \end{align*}
\end{theorem}
\begin{proof}
    We rewrite the proof of~\cite[Theorem 3.3]{andreev_rk} in a more direct way.  Given $u\in \XX_\TT^{\rm var}$, define the test function $v_u \in \YY_\TT^{\rm var}$ as
    \begin{align*}
        v_u|_{T} := I^\star(A|_{V_{T+1}^h})^{-1}\Pi_{V_{T+1}^h}\partial_t u + I^\star I u,
    \end{align*}
    where $\Pi_{V_{T+1}^h}$ denotes the $H$-orthogonal projection onto $V_{T+1}^h$ and $A|_{V_{T+1}^h}\colon V_{T+1}^h\to (V_{T+1}^h)^\star$ is the restriction of $A$ to the discrete space.
    It holds that
    \begin{align*}
          &\int_0^\tend \dual{\partial_t u+ Au}{v_u} \ dt= \sum_{T \in \TT} \int_T \dual{\partial_t u+ Au}{I^\star(A|_{V_{T+1}})^{-1}\Pi_{V_{T+1}^h}\partial_t u + I^\star I u} \ dt\\
          &= \sum_{T \in \TT} \int_T \dual{\partial_t u}{(A|_{V_{T+1}})^{-1}\Pi_{V_{T+1}^h}\partial_t u} + \dual{A Iu}{Iu} + 2\dual{\partial_t u}{Iu} \ dt\\
          &\gtrsim \sum_{T \in \TT}\Big(\norm{\partial_t u}{L^2(T;(V_{T+1}^h)^\star)}^2+ \norm{Iu}{L^2(T;V)}^2 + \int_T 2\dual{\partial_t u}{Iu} \ dt\Big).
    \end{align*}
We consider the term $2\int_T \dual{\partial_t u}{Iu}\,dt$. Let $T=[t_{i-1},t_i] \in \TT$, and let $\phi_1^T,\ldots,\phi^T_{m}$ be a $H$-orthonormal basis of $V_{i-1}^h+V_i^h$ with $m={\rm dim}(V_{i-1}^h+V_i^h)$. This gives with the decomposition $u_{|T}(t)=\sum_{j=1}^{m}c_j(t)\phi_j^T$ and hence
\begin{align*}
    \int_T \dual{\partial_t u}{Iu} \ dt = \sum_{j=1}^{m} \int_T \partial_t c_j I c_j \ dt.
\end{align*}
Applying \cite[Lemma 3.1]{andreev_rk} gives
\begin{align*}
     \int_T \partial_t c_j I c_j \ dt \geq  \int_T \partial_t c_j c_j \ dt \quad \text{for all } j =1,\ldots, m.
\end{align*}
Altogether, this shows
\begin{align*}
     \int_0^\tend &\dual{\partial_t u+ Au}{v_u} \ dt+ \norm{u(0)}{H}^2\\
    &\gtrsim
    \sum_{T \in \TT}\Big(\norm{\partial_t u}{L^2(T;(V_{T+1}^h)^\star)}^2+ \norm{Iu}{L^2(T;V)}^2 \ dt\Big) + \norm{u(\tend)}{H}^2.
\end{align*}
Since $\norm{v_u}{\YY}^2 + \norm{u(0)}{H}^2 \lesssim \sum_{T \in \TT}\Big(\norm{\partial_t u}{L^2(T;(V_{T+1}^h)^\star)}^2+ \norm{Iu}{L^2(T;V)}^2 \Big)+\norm{u(\tend)}{H}^2$, we get that
    \begin{align*}
         \sup_{(v,w)\in \YY^{\rm var}_\TT \times V^h_0 \setminus \{0\}} &\frac{\int_0^\tend \dual{(\partial_t + A)u}{v}\,ds +\dual{u(0)}{w}}{(\norm{v}{\YY}+\norm{w}{H})} \\
         &\gtrsim \Big(\sum_{T \in \TT}\norm{\partial_t u}{L^2(T;(V_{T+1}^h)^\star)}^2\Big)^{1/2}
    + \norm{Iu}{L^2(0,\tend;V)} + \norm{u(\tend)}{H}. 
    \end{align*}
We conclude the proof analogously to the proof of Theorem \ref{thm:infsup}.
\end{proof}
\section{Numerical Experiments}
We consider the heat equation, i.e., 
$A = -\Delta$ confined to the physical domain 
$\Omega = (0,1)^2 \subset \mathbb{R}^2$, equipped 
with homogeneous Dirichlet boundary conditions on
$\partial \Omega$ and set $\tend=1$. 
For the space discretization, we consider a conforming,
uniformly shape regular triangulation $\mathcal{E}_h$ of the domain $\Omega$ with mesh-size $h>0$ and consider as well the lowest order Lagrangian finite element space, in the following, denoted by $V:=\mathcal{S}^1_0(\mathcal{E}_h)$, which also satisfies the homogeneous Dirichlet boundary
conditions. 
The computational implementation is conducted
in the {\tt Matlab} library {\tt MooAFEM} \cite{innerberger2023mooafem}. 

\subsection{Fixed Space Mesh \texorpdfstring{$\mathcal{E}_h$}{Eh}}
\label{sec:fixed_space_mesh}
We first study temporal singularities. Two typical cases are relevant here: start-up singularities caused by incompatible initial data, and singularities induced by the right-hand side $f$.
We consider a spatial mesh $\mathcal{E}_h$ with
$h = 7.8 \times 10^{-3}$ and $\text{dim}(\mathcal{S}^1_0(\mathcal{E}_h)) \approx 8 \times 10^{3}$. 
For the initial condition of the semi-discretization,
we consider the $L^2(\Omega)$-orthogonal projection of $u_0$
onto $\mathcal{S}^1_0(\mathcal{E}_h)$. 

Figure~\ref{fig:Test_1_a} portrays the convergence of
both the adaptive and uniform time stepping (orders $k=2,3$),
without any mesh grading procedure, and their corresponding estimator (Figures~\ref{fig:Test_1_a_Error}, \ref{fig:Test_1_a_Error_p3}) together with the timestep size
over the time interval $[0,\tend]$ of the last iteration of the algorithm (Figures~\ref{fig:Test_1_a_Error_Mesh_Size}, \ref{fig:Test_1_a_Error_Mesh_Size_p3}).
In these computations, the initial value and right-hand side are set to $u_0=1$ and $f = 0$ in $\Omega$, respectively.
We observe a strong refinement towards $t=0$ to resolve the start-up singularity. Note that even though we project the non-matching initial condition $u_0=1$ into $\mathcal{S}^1_0(\mathcal{E}_h)$, the large gradient of the result triggers a near-singularity in the solution the becomes even stronger on finer spatial meshes.

Figure~\ref{fig:fsing} shows the same convergence study but for $u_0=0$ and different right-hand sides $f$ that are constant in space but singular in time. Particularly, we consider: $f(t,x)=|t-0.5|^{0.55}$ (top row), $f(t,x)=\max\{t-\pi/5,0\}$ (middle row), and $f(t,x)=\max\{1-\frac{10}{\pi}t,0\}$
Note that all right-hand sides satisfy $t\mapsto f(t,x)\in H^1(0,\tend)$. Differentiation of the equation shows $\partial_t u(0)=f(0)$ and thus $f(0)\notin \mathcal{S}^1_0(\mathcal{E}_h)$ induces a (weaker) start-up singularity which triggers refinement towards $t_0$ in the top and bottom row of Figure~\ref{fig:fsing}.
While the refinement towards the singularity at $t=0.5$ in the top row is very symmetric, the corresponding refinement towards the singularities at $t=\pi/5$ and $t=\pi/10$ show a directional grading that fits very well with~\eqref{eq:graded}, even though we do not enforce it in these experiments: Time steps are allowed to shrink arbitrarily fast but must grow slowly in forward time direction. In the right-hand side plot of Figure~\ref{fig:Test_1}, we plot the step size for $f(t,x)=\max\{t-\pi/5,0\}$ (middle row of Figure~\ref{fig:fsing}) over the distance from the singularity at $t=\pi/5$. We observe that, at least in this experiment, the adaptive algorithm automatically generates meshes that obey~\eqref{eq:graded} for $g_0=1/3$, even though no grading is enforced.

In Figure~\ref{fig:Test_1}, we observe that the algorithm seems to realize a moderate grading automatically, without enforcing it.

\begin{figure}[!ht]
	\centering
	\begin{subfigure}{0.452\linewidth}
		\includegraphics[width=\textwidth]
		{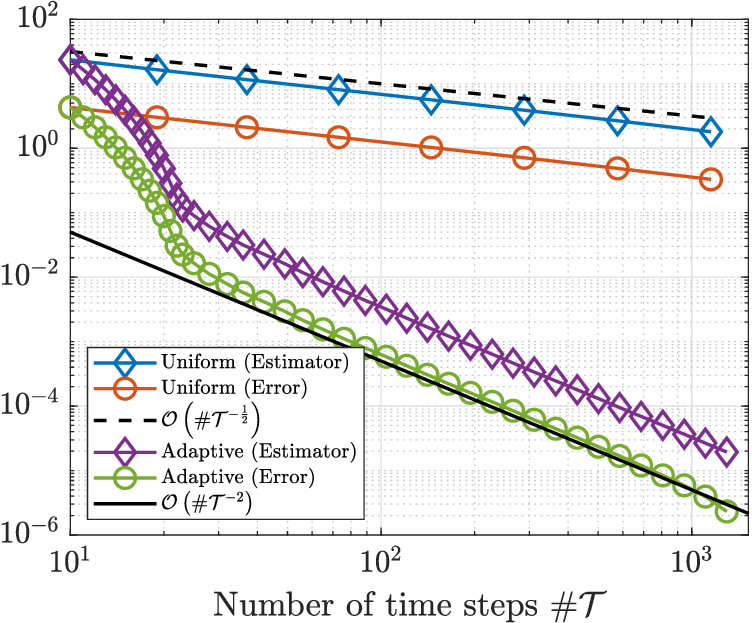}
		\subcaption{
		Errors and estimator (order $k=2$) vs. $\#\TT$}
		\label{fig:Test_1_a_Error}
	\end{subfigure}
	\begin{subfigure}{0.495\linewidth}
		\includegraphics[width=\textwidth]
		{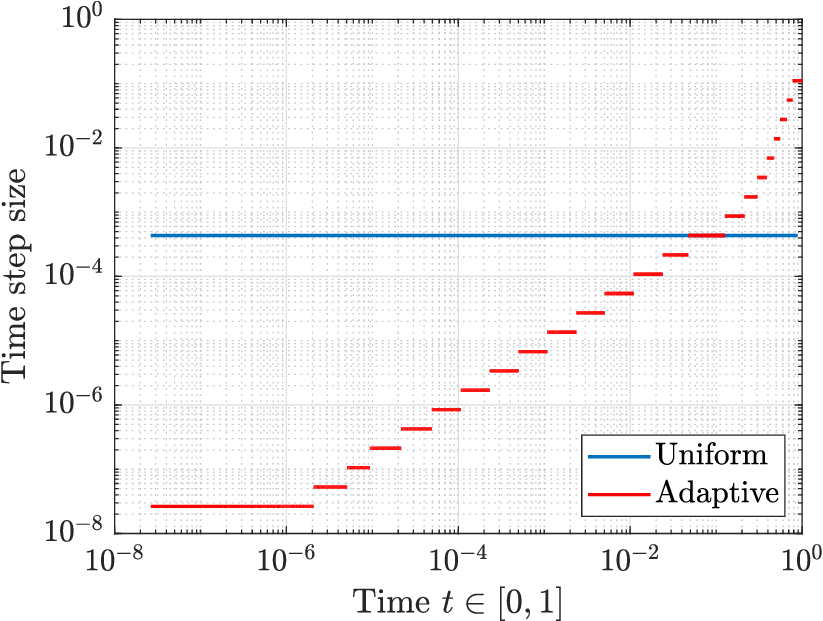}
		\subcaption{Element size (order $k=2$) vs. position in time}
		\label{fig:Test_1_a_Error_Mesh_Size}
	\end{subfigure}
	 \\
	\centering
    \begin{subfigure}{0.452\linewidth}
		\includegraphics[width=\textwidth]
		{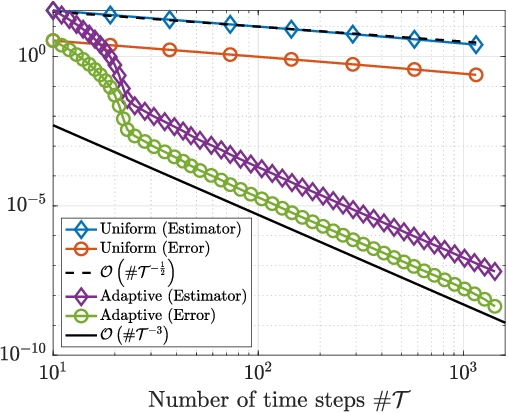}
		\subcaption{
		Errors and estimator (order $k=3$) vs. $\#\TT$}
		\label{fig:Test_1_a_Error_p3}
	\end{subfigure}
	\begin{subfigure}{0.495\linewidth}
		\includegraphics[width=\textwidth]
		{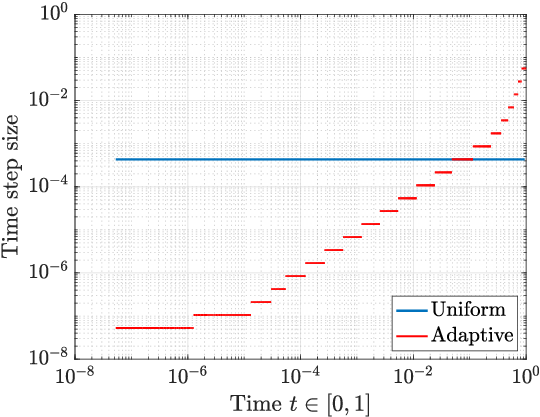}
		\subcaption{Element size (order $k=3$) vs. position in time}
		\label{fig:Test_1_a_Error_Mesh_Size_p3}
	\end{subfigure}
    \\
	\caption{\label{fig:Test_1_a} 
    (a, c) Convergence of the error in the $\mathcal{X}$-norm (comparison with finest approximation) and estimator for adaptive ($\theta = 1/2$) and uniform mesh refinement for orders $k=2,3$.
    (b, d)  Corresponding sizes of local time steps of the last iteration of the adaptive/uniform algorithm plotted
    over their position in the time interval $[0, 1]$. 
    The initial datum and right-hand side are set to $u_0=1$ and $f = 0$ in $\Omega$, respectively.
	}
\end{figure}

\begin{figure}[!ht]
	\centering
		\includegraphics[width=0.4625\textwidth]{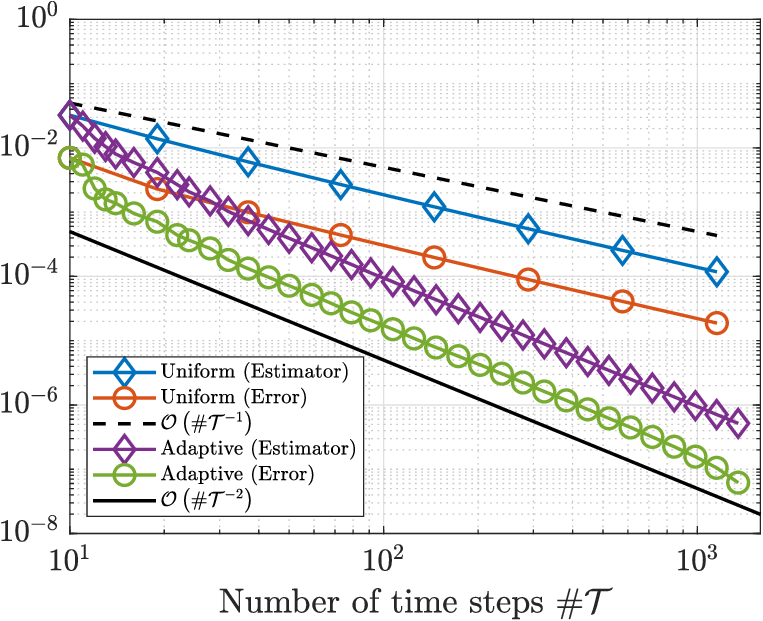}%
		\includegraphics[width=0.49\textwidth]{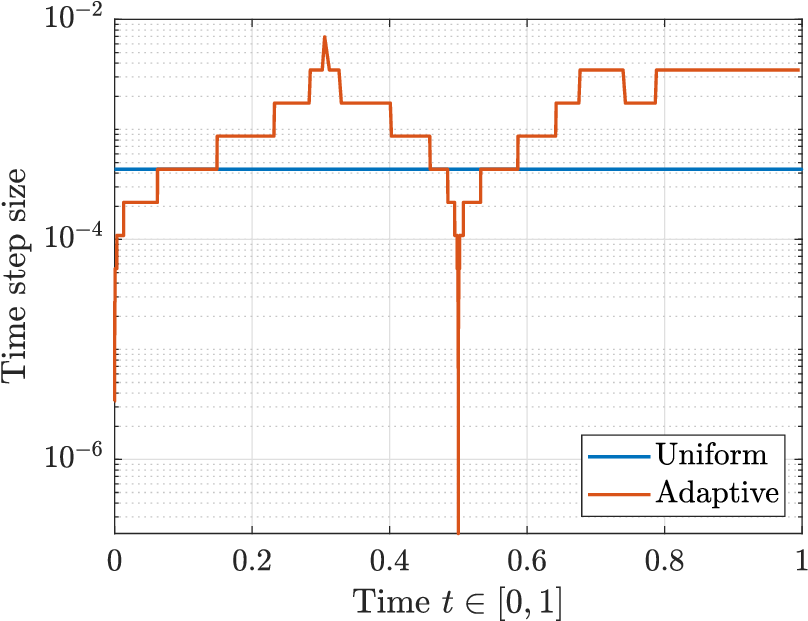}\\%
		\includegraphics[width=0.4625\textwidth]{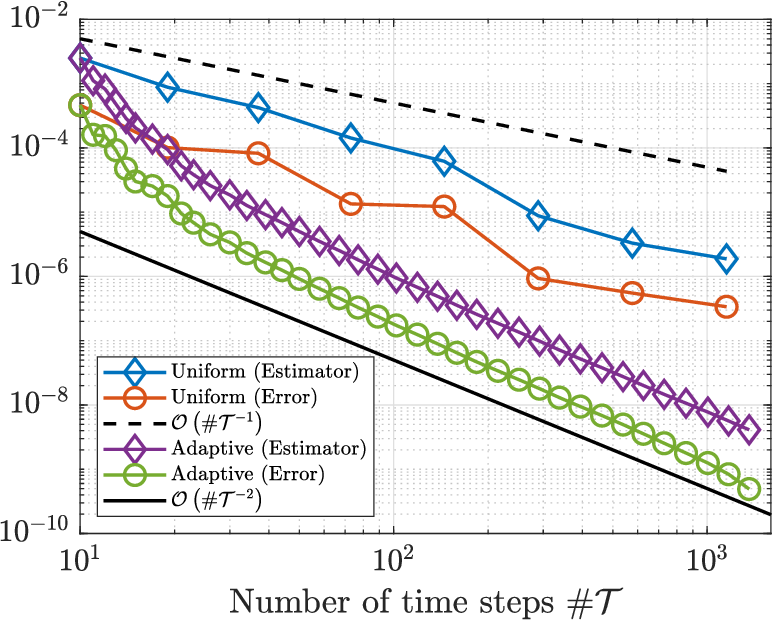}%
		\includegraphics[width=0.49\textwidth]{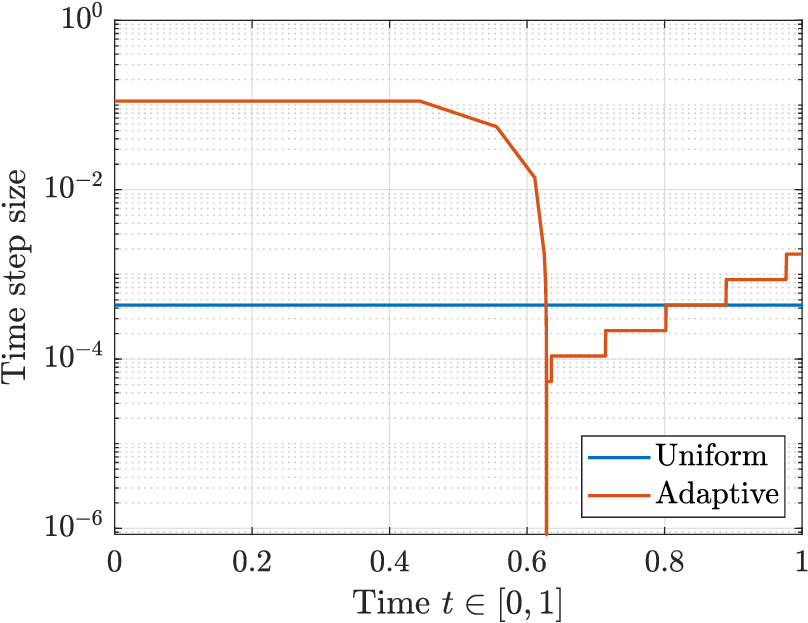}\\%
        \includegraphics[width=0.4625\textwidth]{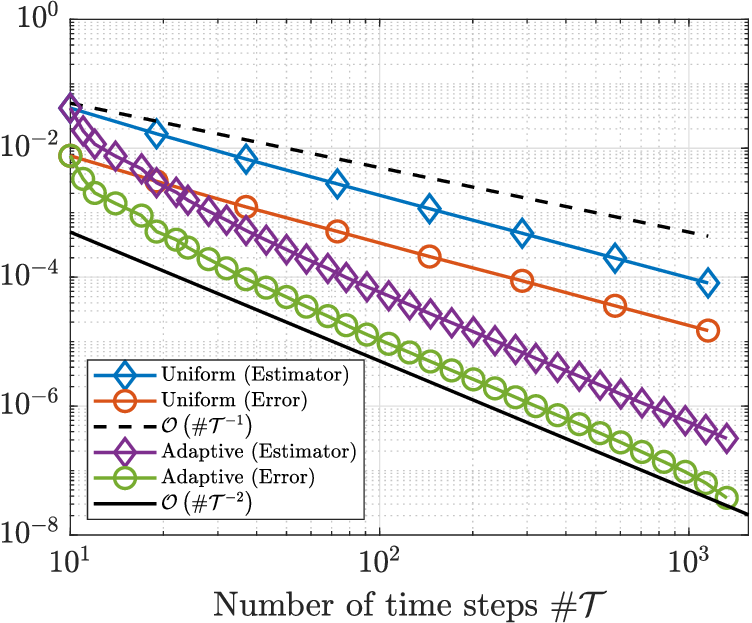}%
		\includegraphics[width=0.49\textwidth]{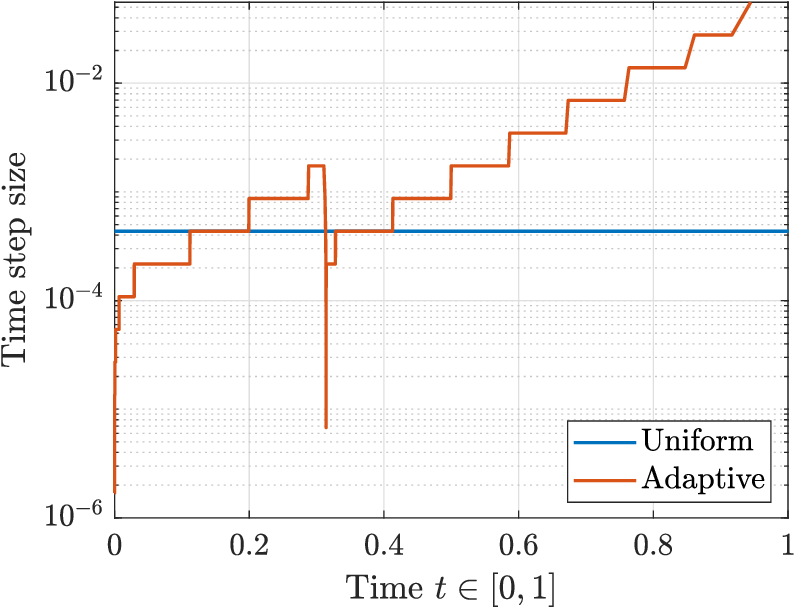}\\%
	\caption{
    Convergence plots for error in the $\mathcal{X}$-norm (w.r.t. reference approximation) and estimator for adaptive ($\theta=1/2$) and uniform mesh-refinement (left column) and local time step sizes of the final adaptive and uniform meshes (right column). The initial condition is $u_0=0$ for all experiments and the right-hand sides are: $f(t,x)=|t-0.5|^{0.55}$ (top row), $f(t,x)=\max\{t-\pi/5,0\}$ (middle row), $f(t,x)=\max\{1-\frac{10}{\pi}t,0\}$ (bottom row).
	}\label{fig:fsing}
\end{figure}

\begin{figure}[!ht]
	\centering
			\includegraphics[width=0.70\textwidth]{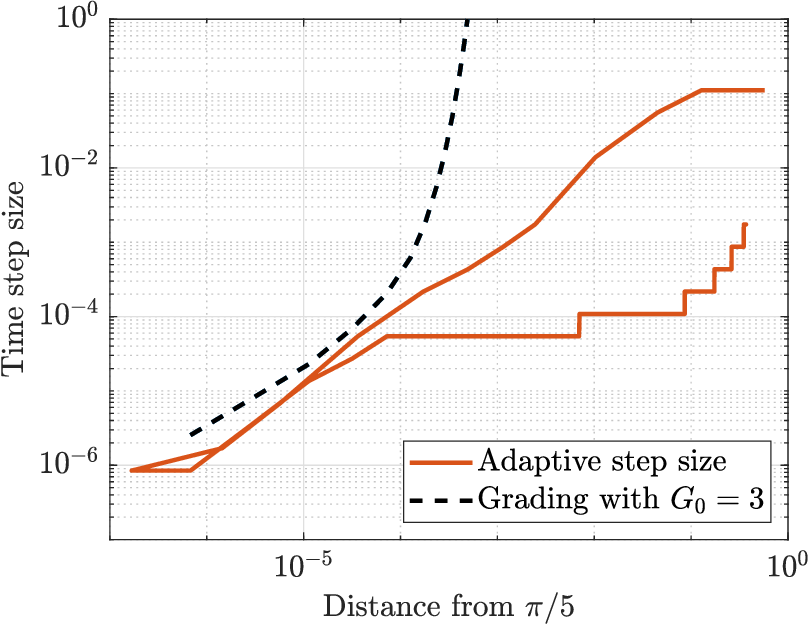}	
	\caption{\label{fig:Test_1} 
   Grading analysis of the experiment in the middle row from Figure~\ref{fig:fsing}: Step size over distance from singularity at $t=\pi/5$ compared with theoretical upper bound for step size from~\eqref{eq:graded} with $G_0=3$.}		
\end{figure}

\subsection{Influence of the Space Discretization}
In Figure~\ref{fig:Test_3_a_Plots}, we compare the
accuracy of the Crank-Nicolson method from~\cite{infsup} and the proposed scheme~\eqref{eq:cnscheme}.
In Figures \ref{fig:Test_3_a_Plots_1}-\ref{fig:Test_3_a_Plots_4}, we compute the errors and estimators for these two methods as we uniformly refine the spatial mesh $\mathcal{E}_h$. More precisely, this results in a sequence of meshes with
$N_h \approx 5 \times 10^2,
2 \times 10^3,8\times 10^3,3.2\times 10^4$, which correspond to mesh sizes $h = 3.12 \times 10^{-2}, 1.56 \times 10^{-2},7.81 \times 10^{-3}, 3.9\times 10^{-3}$.
One can readily observe that the Crank-Nicolson scheme suffers from a pre-asymptotic regime and only 
yields optimal convergence once a CFL condition is fulfilled as predicted by the theory. Furthermore, we observe that the convergence of~\eqref{eq:cnscheme} is oblivious to the
problem's underlying FE discretization.

\begin{figure}[!ht]
	\centering
	\begin{subfigure}{0.465\linewidth}
		\includegraphics[width=\textwidth]
		{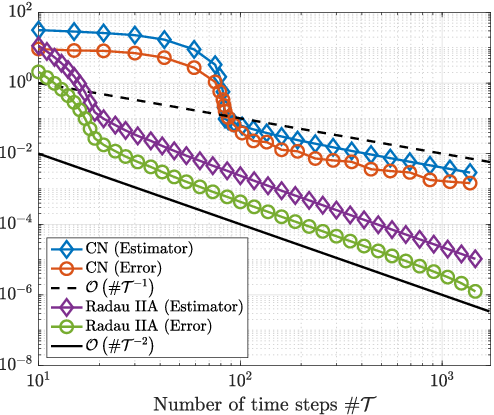}
		\subcaption{
		$N_h \approx 5 \times 10^2$.}
		\label{fig:Test_3_a_Plots_1}
	\end{subfigure}
	\begin{subfigure}{0.465\linewidth}
		\includegraphics[width=\textwidth]
		{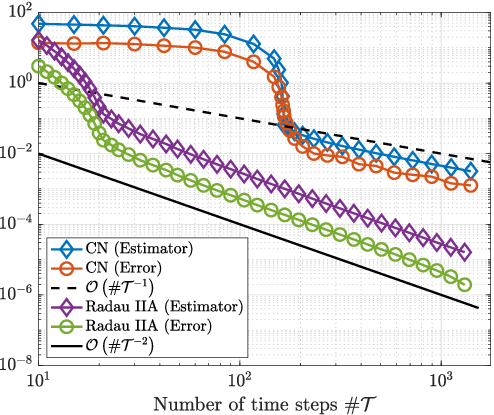}
		\subcaption{
		$N_h \approx 2 \times 10^3$.}
		\label{fig:Test_3_a_Plots_2}
	\end{subfigure}
    \\
	\centering
	\begin{subfigure}{0.465\linewidth}
		\includegraphics[width=\textwidth]
		{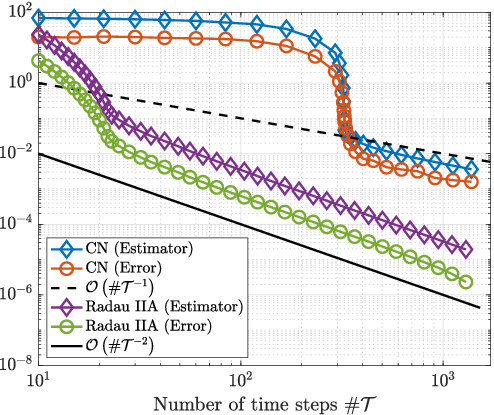}
		\subcaption{
		$N_h \approx 8 \times 10^3$.}
		\label{fig:Test_3_a_Plots_3}
	\end{subfigure}
	\begin{subfigure}{0.465\linewidth}
		\includegraphics[width=\textwidth]
		{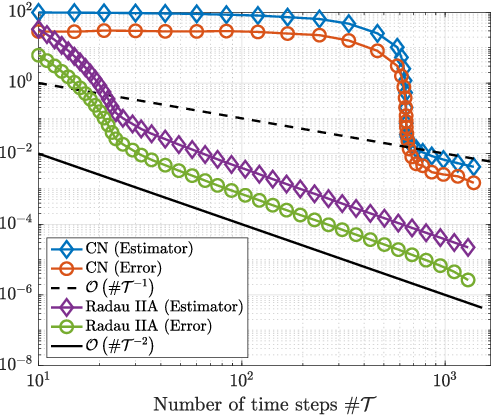}
		\subcaption{
		$N_h \approx 3.2 \times 10^4$.}
		\label{fig:Test_3_a_Plots_4}
	\end{subfigure}
    \\
	\caption{\label{fig:Test_3_a_Plots} 
    Comparison of the Crank-Nicolson (CN) and the proposed scheme~\eqref{eq:cnscheme} (Hybrid) for adaptive ($\theta = 1/2$) mesh refinement and for different values of the total number of degrees of freedom $N_h$.
    The convergence of the error is computed in the $\mathcal{X}$-norm (comparison with the finest approximation). 
    The meshes $\mathcal{E}_h$ are obtained through a successive uniform refinement of a given starting mesh. 
	}
\end{figure}


\appendix

\section{Moderate grading of time meshes}\label{sec:closure}
We first show that Algorithm~\ref{alg:timeadapt} produces meshes that are moderately graded and that the crucial \emph{closure estimate} is satisfied. 
\begin{lemma}\label{lem:closure}
    The refinement procedure from Algorithm~\ref{alg:timeadapt} guarantees moderate gradedness~\eqref{eq:graded} with $G_0=3g_0$ for $k=2$ and $G_0=2g_0$ for $k>2$ and satisfies the closure estimate, i.e., there is a constant $C_{\rm cl}>0$ such that
    \begin{align}\label{eq:closure}
    \#\TT_\ell - \#\TT_0 \leq C_{\rm cl} \sum_{k=0}^{\ell-1} \#\MM_k \quad\text{for all }\ell\in\N.
    \end{align}
\end{lemma}
\begin{proof}
To see~\eqref{eq:graded}, we argue by induction. The initial mesh $\TT_0$ is moderately graded by assumption. Assume that $\TT_\ell$ is moderately graded. The grading~\eqref{eq:graded} can only be violated if an element $T_i$ is refined while its forward neighbor $T_{i+1}$ is not refined. Since the marking procedure ensures that this can only happen if $|T_{i+1}|\leq g_0|T_i|$, the new mesh $\TT_{\ell+1}$ satisfies~\eqref{eq:graded} with $G_0=3g_0$ for $k=2$ and $G_0=2g_0$ for $k>2$.
This concludes the induction.

The closure estimate for the present 1D graded refinement was shown in~\cite{closure1d} with the slight variation that the grading was enforced for all neighbors instead of just the neighbor forward in time. Let $\widetilde \TT_\ell$ denote the sequence of meshes generated by this variation, i.e., with $\widetilde \MM_\ell := \widetilde \TT_\ell\cap \MM_\ell$ in Step~4 of Algorithm~\ref{alg:timeadapt}, and $\widetilde \UU:=\set{T\in\widetilde\TT_\ell\setminus\widehat{\widetilde \MM_\ell}}{\exists \text{neighbor }T'\in \widehat{\widetilde\MM_\ell} \text{ with } |T|/|T'|>g_0}$ instead of $\UU$ in Step~7 of Algorithm~\ref{alg:timeadapt}.

We show by induction that $\widetilde \TT_\ell$ is always a refinement of $\TT_\ell$.
For $\ell=0$, we have $\TT_0=\widetilde \TT_0$ by definition. Assume that $\widetilde \TT_\ell$ is a refinement of $\TT_\ell$.
Let $T\in\TT_\ell\setminus\TT_{\ell+1}$ be an element that is refined in $\TT_\ell$ to generate $\TT_{\ell+1}$.
We consider three cases:

Case~1: Let $T\in\MM_\ell\cap\widetilde \TT_\ell$ in Step~4, which implies $T \in \widetilde \MM_\ell$ and therefore $T$ will be refined to create $\widetilde \TT_{\ell+1}$. 

Case~2: Let $T\in\UU\cap \widetilde \TT_\ell$ in some iteration of Step~7. Since $\UU$ is constructed iteratively, there must exist a chain of elements $T_1,\ldots,T_m=T$ in $\TT_\ell$ such that $T_1\in \MM_\ell$ and such that $T_j$ is the left neighbor of $T_{j+1}$ as well as $|T_{j+1}|/|T_j|>g_0$ for all $j=1,\ldots,m-1$.
It is clear that $T_1\in \widetilde \TT_\ell$, since refinement of $T_1$ would have triggered refinement of all $T_j$ and thus contradicts $T\in\widetilde \TT_\ell$. Therefore, $T_1\in \widetilde \MM_\ell$ and eventually also $T_m=T\in \widetilde \UU$. Hence $T$ is refined to obtain $\widetilde \TT_{\ell+1}$.

Case~3: Let $T\notin \widetilde \TT_\ell$. Then the induction hypothesis implies that $T$ was already refined and its successors are in $\widetilde \TT_\ell$. Thus, refining $T$ in $\TT_\ell$ will not create a mesh that is finer than $\widetilde \TT_{\ell+1}$.
This concludes the induction.

We have shown in particular that $\#\TT_\ell\leq \#\widetilde \TT_\ell$. Together with $\#\MM_\ell \geq \#\widetilde \MM_\ell$ for all $\ell$, the closure estimate for $\widetilde \TT_\ell$ implies the closure estimate for $\TT_\ell$.
This concludes the proof.
\end{proof}

The following two results show that mesh grading does not alter the best possible convergence rate. The arguments can already be found for the more complicated higher dimensional case in~\cite{stevensondemlow}.
\begin{redlemma}\label{lem:graded_refinement}
    Let $\TT_0$ be the initial mesh, and $\T$ be the collection of all meshes that can be obtained from $\TT_0$ by recurrent trisections ($k=2$) or bisections ($k>2$). Let $\T^{\rm grad}\subseteq \T$ be the collection of all meshes that additionally respect~\eqref{eq:graded}. Then, for every $\TT \in \T$, there exists a mesh $\TT^{\rm grad} \in \T^{\rm grad}$ such that $\TT^{\rm grad}$ is a refinement of $\TT$ (in the sense that each element $T\in\TT$ is either itself in $\TT^{\rm grad}$ or all of its descendants are in $\TT^{\rm grad}$) and it holds
    \begin{align*}
        \# \TT^{\rm grad} -\# \TT_0 \leq C \Big( \# \TT -\# \TT_0\Big).
    \end{align*}
    where $C>0$ is independent of $\TT$.
\end{redlemma}
\begin{proof}[Proof]
Let $\TT \in \T$. Starting from $\TT_0$, we construct a mesh $\TT^{\rm grad}$ iteratively as follows: For $\ell=0,1,\ldots,L$ set $\MM_\ell:=\{T \in \TT_\ell: \exists T' \in \TT, T' \subsetneq T\}$. Refine all elements $T \in \MM_\ell$ via Steps~5--9 in Algorithm~\ref{alg:timeadapt} to obtain $\TT_{\ell+1}$. The procedure terminates when $\MM_L=\emptyset$ and we obtain a mesh $\TT^{\rm grad}:= \TT_L \in \T^{\rm grad}$ that is a refinement of $\TT$. The mesh closure estimate \eqref{eq:closure} gives
    \begin{align*}
        \#\TT_L-\# \TT_0 \lesssim \sum_{\ell=0}^{L-1} \# \MM_\ell.
    \end{align*}
    To bound $\sum_{\ell=0}^{L-1} \# \MM_\ell$, we construct a similar sequence of meshes by recurrent trisections/bisections starting from $\TT_0=\widetilde\TT_0$, without any mesh grading. For $\ell=0,1,\ldots,\widetilde L$, set $\widetilde\MM_\ell:=\{T \in \widetilde\TT_\ell: \exists T' \in \TT, T' \subsetneq T\}$. Substitute all elements $T \in \widetilde\MM_\ell$, by its children in $\TT_T$ to obtain $\widetilde\TT_{\ell+1}$. This procedure terminates when $\widetilde\MM_{\widetilde L}=\emptyset$, and we obtain the mesh $\widetilde\TT_{\widetilde L}=\TT$. Note that, since the mesh grading procedure refines extra elements, we necessarily have $\sum_{\ell=0}^{L-1} \# \MM_\ell \leq \sum_{\ell=0}^{\widetilde L-1} \# \widetilde\MM_\ell$. Furthermore, since $\widetilde\TT_{\ell+1}$ is obtained via simple trisection/bisection of marked elements from $\widetilde \TT_\ell$, there holds
    \begin{align*}
    \# \widetilde\TT_{\ell+1} = \# \widetilde\TT_\ell+ \left\{\begin{array}{cc}1 & \text{for }k>2\\ 2 & \text{for }k=2\end{array}\right\} \# \widetilde\MM_\ell.
    \end{align*}
    This shows immediately
    \begin{align*}
        \# \TT = \# \TT_0+ \left\{\begin{array}{cc}1 & \text{for }k>2\\ 2 & \text{for }k=2\end{array}\right\}\sum_{\ell=0}^{\widetilde L-1} \# \widetilde\MM_\ell.
    \end{align*}
    The combination of all the steps yields
    \begin{align*}
        \#\TT^{\rm grad}-\# \TT_0 \lesssim \sum_{\ell=0}^{L-1} \# \MM_\ell \leq \sum_{\ell=0}^{\widetilde L-1} \# \widetilde\MM_\ell \leq \# \TT -\# \TT_0
    \end{align*}
    and concludes the proof.
\end{proof}
This result immediately gives that the best convergence rate $s$ in the class $\T$ is the same as in the class $\T^{\rm grad}$.
\begin{redlemma}\label{lem:graded_equiv}
    There holds for $s>0$ that
    \begin{align}
    \label{eq:approx1}
    \begin{split}
    C_{\rm approx}\leq 
       C_{\rm approx}^{\rm grad}:=\sup_{N \in \N} &\inf_{\substack{\TT \in \T^{\rm grad}\\\# \TT - \# \TT_0 \leq N}} \eta_\TT N^s\leq C_{\rm grad} (C_{\rm approx}+1)\\
       \end{split}
    \end{align}
    for a constant $C_{\rm grad}>0$ that depends only on $s$, $f$, $u$, and the constant from Lemma~\ref{lem:graded_refinement}.
\end{redlemma}
\begin{proof}[Proof]
Because $\T^{\rm grad} \subset \T$, $C_{\rm approx}\leq 
       C_{\rm approx}^{\rm grad}$ follows immediately. For the other estimate, we
    denote by $(\widetilde\TT_{\widetilde N})_{{\widetilde N} \in \N} \subset \T$ with $\#\widetilde\TT_{\widetilde N}-\# \TT_0\leq {\widetilde N}$ a supremizing sequence on the left-hand side of~\eqref{eq:approx1}. By Lemma \ref{lem:graded_refinement}, there exists a sequence of graded meshes $(\TT_{\widetilde N})_{\widetilde N \in \N} \subset \T^{\rm grad}$ with $\#\TT_{\widetilde N}-\# \TT_0\leq C\widetilde N$ such that $\TT_{\widetilde N}$ is a refinement of $\widetilde\TT_{\widetilde N}$. For every $N\in\N$ with $N \geq C$, there exists $ \widetilde N \in \N$ such that $C\widetilde N \leq N $ and $N \leq C(\widetilde N+1)$. This and Lemma~\ref{lem:qmon} show
    \begin{align*}
        \inf_{\substack{\TT \in \T^{\rm grad}\\\# \TT - \# \TT_0 \leq N}} \eta_\TT N^s \leq \eta_{\TT_{\widetilde N}}N^s 
        \leq \eta_{\TT_{\widetilde N}} C^s(\widetilde N+1)^s\leq  C_{\rm mon}\eta_{\widetilde\TT_{\widetilde N}} C^s(\widetilde N+1)^s\lesssim C_{\rm approx}.
        \end{align*}        
    For the finitely many remaining $N<C$, we use efficiency from Lemma~\ref{lem:reliability} and Corollary~\ref{cor:cea} to show $\eta_\TT\lesssim \norm{u-u_0}{\XX}+\norm{f}{H^1(0,\tend;V^\star)}$, which implies
    \begin{align*}
         \sup_{N \in \N} &\inf_{\substack{\TT \in \T^{\rm grad} \\ \# \TT - \# \TT_0 \leq N}} \eta_\TT N^s \lesssim C_{\rm approx} + 1
    \end{align*}
    and concludes the proof.
\end{proof}
\bibliographystyle{siamplain}
\bibliography{literature}
\end{document}